\documentclass[11pt]{amsart}

\usepackage{amsthm, amsfonts, graphicx, pinlabel}
\usepackage[all]{xy}
\usepackage{hyperref}
\usepackage[usenames,dvipsnames]{xcolor}

\usepackage{url}
\usepackage{mathtools}

% Header file.  Contains declarations for
% common shortcuts and theorem environments.  NO OVERALL FORMATTING!
% Josh Sabloff [Created 2/13/02 ... last revised 6/15/07]

% Document-specific commands
\newcommand{\df}{\ensuremath{\partial}}

 %% If we want the eta to be bold,
                                %% use \boldsymbol{}

%\newcommand{\pd}[2]{\ensuremath{\frac{\partial #1}{\partial #2}}}

\def\TM+{T^*(\rr_+ \times M)}

% Easy access to ``blackboard bold'' letters

\newcommand{\rr}{\ensuremath{\mathbb{R}}}
\newcommand{\zz}{\ensuremath{\mathbb{Z}}}

% Theorems and such
\theoremstyle{plain}
\newtheorem{thm}{Theorem}[section]
\newtheorem{cor}[thm]{Corollary}
\newtheorem{lem}[thm]{Lemma}

\newtheorem{prop}[thm]{Proposition}

\theoremstyle{definition}
\newtheorem{defn}[thm]{Definition}

\theoremstyle{remark}
\newtheorem{rem}[thm]{Remark}

\numberwithin{equation}{section}  

\newcommand{\dfn}[1]{{\textbf{#1}}}

\newcommand{\alg}{\ensuremath{\mathcal{A}}}
\newcommand{\aug}{\ensuremath{\varepsilon}}
\newcommand{\leg}{\ensuremath{\Lambda}}

\DeclareMathOperator{\img}{im}

\begin{document}

\title{Lagrangian Fillings of Legendrian $4$-Plat Knots} 

\author[E. Lipman]{Erin R. Lipman} \address{University of Chicago, Chicago, IL 60637} \email{erlipman@uchicago.edu} 

\author[J. Sabloff]{Joshua M. Sabloff} \address{Haverford College,
Haverford, PA 19041} \email{jsabloff@haverford.edu} \thanks{JMS and ERL were
partially supported by NSF grant DMS-1406093 in the preparation of this paper.}

\begin{abstract}
	We  characterize which Legendrian $4$-plat knots in the standard contact $3$-space have exact orientable Lagrangian fillings.  As a corollary, we show that the underlying smooth knot types of fillable Legendrian $4$-plats are positive.
\end{abstract}

\date{\today}

\maketitle

% ********************
% **********
\section{Introduction}
\label{sec:intro}

Questions designed to probe the relationship between properties of smooth knots and those of their Legendrian representatives have driven the field of Legendrian knot theory from its inception.  Bennequin's seminal paper \cite{bennequin}, for example, revealed a connection between the Seifert genus of a smooth knot and the maximal Thurston-Bennequin number of its Legendrian representatives.  Finding upper bounds for the Thurston-Bennequin number from  properties of the underlying smooth knot has provided many interesting connections between smooth and Legendrian knot theory through the slice  genus \cite{rudolph},  quantum invariants \cite{fw:braids-jones,morton-short:homfly-article,rudolph:kauffman-bound}, Khovanov homology \cite{lenny:khovanov, plamenevskaya:transverse-Kh, shumakovitch}, and Heegaard-Floer homology \cite{plamenevskaya:tau}, among others.

Chantraine deepened the connection between Legendrian knots and the slice genus by proving that an exact, orientable Lagrangian filling of a Legendrian knot realizes the slice genus of the underlying smooth knot \cite{chantraine}. To simplify language, we say that a Legendrian knot is \dfn{fillable} if it has an exact, orientable Lagrangian filling; see Section~\ref{sec:lagr-cob} for a precise definition. Chantraine's result motivates the central questions for this paper:

\begin{quote}
Which smooth knot types have fillable Legendrian representatives?  Which Legendrian representatives are fillable?
\end{quote}

It is known, for example, that all positive knots have fillable Legendrian representatives \cite{positivity}. Using a result of Boileau and Orevkov \cite{bo:qp} about symplectic fillings of transverse knots, we see that a necessary condition for smooth knot type to have a fillable Legendrian representative is for the knot to be quasipositive; see \cite{polyfillability} and \cite{positivity} for more details.

When asking about the fillability of a particular Legendrian knot, a general geometric picture has yet to take shape (though see \cite{nrssz:aug-sheaf}). Hence, it is useful to approach the problem by working with families of Legendrian knots. In this paper, we investigate Legendrian $4$-plats, which have previously been considered by Ng \cite{lenny:2-bridge}, who proved that every $2$-bridge knot has a Legendrian $4$-plat representative, and by Casey and Henry \cite{casey-henry}, who proved that Legendrian $4$-plats have at most one Chekanov polynomial. Our main result is a complete characterization of fillable Legendrian $4$-plats; see Section~\ref{ssec:2-bridge} for a specification of the band terminology used in the statement of the theorem:

\begin{thm} \label{thm:main}
A Legendrian $4$-plat knot is fillable if and only if each negative band has at most two crossings and each internal band has at least two crossings.
\end{thm}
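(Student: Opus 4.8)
The plan is to prove the two implications separately: an explicit construction of a filling for the ``if'' direction, and an obstruction argument for the ``only if'' direction. For sufficiency, suppose every negative band has at most two crossings and every internal band has at least two crossings. I would exhibit an exact orientable Lagrangian filling of the Legendrian $4$-plat $\Lambda$ as a \emph{decomposable} cobordism: a concatenation of Legendrian isotopies, pinch moves (oriented resolutions of a single crossing, each realized by a Lagrangian saddle), and minimum cobordisms (each capping off a standard Legendrian unknot by a Lagrangian disk). The construction is by induction on the number of crossings of the front. In the inductive step, I would use the band hypotheses to find a crossing whose pinch --- after a normalizing Legendrian isotopy --- either produces a strictly smaller $4$-plat still satisfying the band conditions, or splits off a standard Legendrian unknot to be removed by a death cobordism; the configurations to handle are external positive bands (peel off one crossing at a time), external negative bands of one or two crossings (resolve so the pinch cancels a kink or frees an unknot), and internal bands of length $\ge 2$ (one pinch still leaves every internal band of length $\ge 2$). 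Iterating reduces the front to a disjoint union of standard unknots bounding disjoint Lagrangian disks; concatenating these with the saddle and minimum cobordisms, read in reverse, gives the filling. (Alternatively, one can check that the band conditions force the underlying smooth knot to be positive --- which also yields the corollary --- and that $\Lambda$ is the natural Legendrian front of a positive diagram, so that the filling produced in \cite{positivity} applies.)

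For necessity, suppose $\Lambda$ is fillable with filling $L$. Then $L$ is connected, and Chantraine's theorem \cite{chantraine} gives $g(L) = g_s(K)$ (the slice genus of the underlying knot $K$), $r(\Lambda) = 0$, and $tb(\Lambda) = -\chi(L) = 2g(L) - 1 = 2g_s(K) - 1$; moreover $\Lambda$ inherits a graded normal ruling from $L$. I would then compute $tb(\Lambda)$ and $r(\Lambda)$ directly from the $4$-plat front in terms of the band data (writhe and a signed cusp count, both expressed through the signs and lengths of the bands) and bound $g_s(K)$ from the band data as well --- via the classical continued-fraction formula for the signature of a $2$-bridge knot, or, where that is not sharp, via the known slice genus of $2$-bridge knots or the ruling polynomial of $\Lambda$ (determined for $4$-plats by \cite{casey-henry}). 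A case analysis then shows that a negative band of three or more crossings, or an internal band of exactly one crossing, contributes crossings or cusps to the front in a way that forces $tb(\Lambda)$ below $2g_s(K)-1$ with no matching increase in $g_s(K)$, so that $tb(\Lambda) + |r(\Lambda)| < 2g_s(K) - 1$, contradicting fillability.

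The main obstacle on the sufficiency side is the bookkeeping of the induction: one must verify that \emph{every} $4$-plat meeting the band conditions admits a pinch (possibly after a Legendrian isotopy) that again meets them and that the process terminates, which needs a careful enumeration of the behavior of external bands, of adjacent bands of equal sign, and of length-two negative bands under resolution. On the necessity side, the delicate part is the exact arithmetic --- closed formulas for $tb(\Lambda)$, $r(\Lambda)$, and a lower bound for $g_s(K)$ in terms of the possibly non-reduced band data, tracking their behavior under the continued-fraction reductions that eliminate bad bands, and checking that the inequality is strict in every case.
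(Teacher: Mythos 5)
Your sufficiency argument is essentially the paper's: the paper also builds the filling inductively as a decomposable cobordism, pinching off the first two bands and one crossing of the third (Lemma~\ref{lem:good-pinch}, Figures~\ref{fig:Ron} and \ref{fig:Hermione}), filling the split-off positive piece by \cite{positivity}, and attaching $1$-handles; your sketch would need the same bookkeeping the paper does (Remark~\ref{rmk:1-neg-xing} for negative end bands, orientation checks in the pinch), but the route is the same. Your parenthetical alternative, however, does not work as stated: Corollary~\ref{cor:pos} only says the underlying \emph{smooth} knot type is positive, and the given front genuinely contains negative crossings, so $\leg$ is not ``the natural Legendrian front of a positive diagram''; positivity of the smooth type yields \emph{some} fillable Legendrian representative, not fillability of the particular Legendrian $4$-plat at hand.

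The genuine gap is in your necessity argument. You propose to rule out negative bands with three or more crossings and internal single bands purely from classical data: $r=0$, maximality of $tb$, and a strict slice--Bennequin defect $tb(\leg)+|r(\leg)|<2g_s(K)-1$ computed from band data, signatures, and slice genera of $2$-bridge knots. But fillability is not detected by classical invariants, and the offending configurations are exactly the ones where classical (and even ruling) obstructions are silent: a $4$-plat with a negative long band, split triple, or split quadruple can have $r=0$, maximal $tb$, a normal ruling, and an augmented Chekanov--Eliashberg DGA, so no inequality of the form you describe can be established in general, and your ``case analysis then shows'' is precisely the unproven core. The paper's proof has to go beyond this: internal single bands are killed by the nonexistence of a normal ruling (Lemma~\ref{lem: Ut} via Proposition~\ref{prop:ruling-obstr}), while negative long bands, split triples, and split quadruples are killed by the Seidel isomorphism together with the fundamental class, i.e.\ by exhibiting, for \emph{every} augmentation, a negative crossing that is a cycle but not a boundary of the linearized differential (Lemma~\ref{lem: fundamental class}, used in Lemmas~\ref{lem: Re}, \ref{lem: Mi}, \ref{lem: Fa}), plus a combinatorial analysis of how band signs propagate (Propositions~\ref{lem: Fred} and \ref{lem: George}). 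Without some substitute for this non-classical input, your necessity direction does not go through; also, as a smaller point, \cite{casey-henry} concerns Chekanov (linearized homology) polynomials of $4$-plats, not ruling polynomials, so it does not supply the genus bound you invoke.
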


We can relate this theorem to the aforementioned result about positive knots as follows:

\begin{cor} \label{cor:pos}
If a Legendrian $4$-plat knot is fillable, then its underlying smooth knot type is positive.
\end{cor}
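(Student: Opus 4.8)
The plan is to derive the corollary as a direct consequence of Theorem~\ref{thm:main} together with the combinatorial classification of which smooth knot types arise as $4$-plats and which of those are positive. A $4$-plat knot is by construction a $2$-bridge knot, and $2$-bridge knots admit a well-understood normal form: the knot $\mathfrak{b}(p,q)$ is determined by a continued fraction expansion, and there is a classical correspondence between a Legendrian $4$-plat's band decomposition and such an expansion. So the first step is to translate the band data of a fillable Legendrian $4$-plat into a continued fraction presentation of the underlying $2$-bridge knot, keeping careful track of signs: positive versus negative bands, and internal versus non-internal bands, should correspond to signs and magnitudes of the partial quotients.

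Next I would invoke Theorem~\ref{thm:main}: fillability forces every negative band to have at most two crossings and every internal band to have at least two crossings. The key observation is that these two constraints together pin down the local structure of the plat tightly enough that the resulting $2$-bridge knot has an all-positive-crossing diagram after an isotopy. Concretely, a negative band with at most two crossings can be absorbed or cancelled against neighboring positive material (for instance, a single negative crossing adjacent to the right positive bands can be removed by a Reidemeister~II-type move, and two negative crossings in an internal position interact with the "at least two crossings" condition on internal bands to produce a positive band after simplification). I would organize this as a small finite case analysis on the types of bands that can occur, showing in each case that the diagram reduces to one with only positive crossings.

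Finally, having produced a diagram of the underlying smooth knot in which every crossing is positive, I would conclude that the knot type is positive by definition. If one wants to be careful, one should note that "positive" for a knot \emph{type} means some diagram is positive, so exhibiting one such diagram suffices; alternatively, one can appeal to the known classification of positive $2$-bridge knots (they are exactly the $\mathfrak{b}(p,q)$ with all partial quotients of one sign in the appropriate expansion) and check the inequalities from Theorem~\ref{thm:main} land inside that family.

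The main obstacle I expect is the bookkeeping in the middle step: matching the paper's band conventions (positive/negative, internal/non-internal, crossing counts) precisely to a continued fraction or to an explicit sequence of Reidemeister moves, and making sure the "at most two" and "at least two" bounds are exactly what is needed — no more, no less — to kill all negative crossings. In particular the boundary cases (a negative band with exactly two crossings, or an internal band with exactly two crossings) are likely where the argument is most delicate, since that is precisely where Theorem~\ref{thm:main}'s inequalities are tight, and one must verify the cancellation still goes through there rather than leaving a residual negative crossing.
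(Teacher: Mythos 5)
Your middle step is, in outline, the paper's actual proof: the paper works directly on the smooth diagram of a fillable Legendrian $4$-plat, removing a single negative crossing in an end band by a Reidemeister II move and then eliminating the remaining negative crossings two at a time, left to right, by explicit isotopies organized into cases (pure side double, split double, and center double with two sub-cases). So the strategy is right, but there is a genuine gap exactly where you predict one: the claim that ``two negative crossings in an internal position \dots produce a positive band after simplification'' is the entire content of the proof, and nothing in your proposal exhibits the isotopies or the structural facts that make them possible. What the paper uses is not just the statement of Theorem~\ref{thm:main} but byproducts of its proof: Corollary~\ref{cor: alternating bands} (bands of a fillable $4$-plat strictly alternate in sign) and Remark~\ref{rmk:1-neg-xing} (a negative first or last band has exactly one crossing). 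Alternation guarantees that every internal negative band is a double flanked by positive bands, which supplies the $+,-,-,+$ ``middle crossings'' of the paper's P4P normal form; the induction then consists of the four explicit isotopies of Figures~\ref{fig:pos_corr} and~\ref{fig:neg_corr}, which remove the negative pair while restoring P4P form around the next negative band. Without the invariant form, the flanking positive bands, and the actual moves, the case analysis you promise cannot be carried out; note also that your worry about the ``tight'' cases is not a boundary issue but the whole problem, since the theorem forces every internal negative band to have exactly two crossings.

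Two further points. The continued-fraction translation in your first step is never needed for the diagrammatic route and only adds bookkeeping. And your fallback route is not safe as stated: every $2$-bridge knot admits a continued fraction expansion with all partial quotients of one sign (this is what produces its standard alternating diagram), so ``all partial quotients of one sign in the appropriate expansion'' does not by itself characterize positive $2$-bridge knots, and you would have to formulate and prove a precise such classification before it could replace the explicit isotopies.
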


In particular, we see that in order to investigate smooth knot types that are fillable but not positive, we must look beyond Legendrian $4$-plats.

To prove these results, we organize the remainder of the paper as follows:  after setting notation for Legendrian $4$-plats and the non-classical ruling and Legendrian contact homology invariants in Section~\ref{sec:Legendrian Knots}, we discuss Lagrangian fillings and obstructions to their existence from the aforementioned invariants in Section~\ref{sec:filling}.  The tools developed in Sections~\ref{sec:Legendrian Knots} and   \ref{sec:filling} are brought to bear on the proofs of Theorem~\ref{thm:main} and Corollary~\ref{cor:pos} in Sections~\ref{sec:Proof} and \ref{sec:La}, respectively.

%-----------------------------------

\section{Legendrian Knots and their Invariants}
\label{sec:Legendrian Knots}

In this section, we describe Legendrian $4$-plat knots and then briefly discuss non-classical invariants of Legendrian knots, especially rulings and Legendrian contact homology.  These invariants will give rise to obstructions to the existence of a Lagangian filling, as we shall describe in Section~\ref{sec:filling}.  The goal of the present section is not to give a comprehensive exposition, but rather to recall standard ideas and to set notation.  We assume familiarity with basic notions of Legendrian knots, their front diagrams, and their classical invariants in the standard contact $\rr^3$; see \cite{etnyre:knot-intro}, for example.

\subsection{Legendrian $4$-Plat Knots}
\label{ssec:2-bridge}

Our main result pertains to a family of Legendrian knots that we will call Legendrian $4$-plat knots.

\begin{defn} \label{defn:leg-2-bridge}
A \dfn{Legendrian $4$-plat} is a Legendrian link admitting a front diagram with exactly four cusps (two left and two right).
\end{defn}

Figure~\ref{fig:2bridgeex} gives an example of a Legendrian $4$-plat knot.  The knot in the figure is in \dfn{plat form}, in which all of the left cusps have the same $x$-coordinate, all of the right cusps have the same or arbitrarily close $x$ coordinate,  and no two crossings have the same $x$-coordinate.  Any Legendrian front can be put in plat form by pulling each of the left (resp. right) cusps outwards to match the $x$-coordinate of the leftmost (resp. rightmost) cusp using Reidemeister II moves.

One more basic term to introduce is a \dfn{spanning arc}, which is a smooth path in a front diagram joining a left to a right cusp. Thus, in terms of spanning arcs, a Legendrian $4$-plat knot is a Legendrian knot admitting a front with exactly four spanning arcs. 

\begin{figure}
\centerline{
\includegraphics[width=8cm]{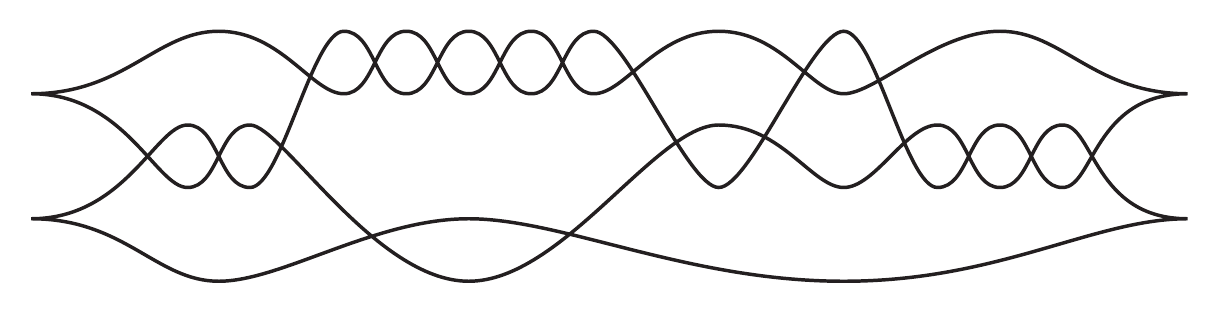}}
\caption{A Legendrian $4$-plat knot represented by tuple $[3,(6,2),2,(2,0),4]$.}
\label{fig:2bridgeex}
\end{figure}

We note that the smooth knot type of a Legendrian $4$-plat knot is $2$-bridge. In fact, Ng shows in \cite{lenny:2-bridge} that any smooth $2$-bridge knot has a Legendrian $4$-plat representative. Further, that representative may be assumed to have a spanning arc without crossings. On the other hand, not all Legendrian representatives of $2$-bridge knots have Legendrian $4$-plat diagrams. For example, one can show that certain (tb-maximal) Legendrian representatives of sufficiently large twist knots investigated by Chekanov \cite{chv} are obstructed from having $4$-plat representatives by the Maslov indices of their linearized Legendrian contact homologies. 

We will now introduce the notation that we will use to discuss Legendrian $4$-plat knots in the proof of the main theorem. A tb-maximal $4$-plat Legendrian front in plat form necessarily has its first and last crossings between the center two strands. Thus a potentially fillable Legendrian knot in $4$-plat form is of the form shown in Figure \ref{fig:cannonical form}. We can represent such a front with an $n$-tuple (where $n$ is necessarily odd) of the form
\[[b_1, b_2=(b_{2u},b_{2l}), b_3,\cdots,b_{n-1}=(b_{(n-1)u}, b_{(n-1)l}),b_n],\]
where $b_i$, $b_{iu}$ and $b_{il}$ are the numbers of crossings in the corresponding boxes in the figure. To ensure that a front is represented by a unique tuple, we require that $b_i>0$ for odd $i$ and that $b_i=b_{iu}+b_{il}>0$ for even $i$. The front in Figure~\ref{fig:2bridgeex} is represented by the tuple $[3,(6,2),2,(2,0),4]$.

Throughout the proof, we will abuse notation and use $b_i$ to refer both to the band itself and the number of the crossings in the band.  We will refer to $b_i$ as the $i^{th}$ \dfn{band} and will call it positive or negative depending on the sign of the crossings it contains (it is clear that all crossings within a single band have the same sign). The $b_i$ with even $i$ will be referred to as \dfn{side} bands and the $b_i$ with odd $i$ will be referred to as \dfn{center} bands. The term \dfn{sub-band} will denote a center band or one (possibly trivial) portion ($b_{iu}$ or $b_{il}$) of a side band. A side band is called \dfn{split} if both $b_{iu}$ and $b_{il}$ are non-zero. A band $b_i$ with $1<i<n$ is called an \dfn{internal} band.

\begin{figure}

\labellist
	\small
	\pinlabel $b_1$ [b] at 69 45
	\pinlabel $b_{2u}$ [b] at 122 72
	\pinlabel $b_{2l}$ [b] at 122 16
	\pinlabel $b_3$ [b] at 177 45
	\pinlabel $b_{(n-1)u}$ [b] at 315 72
	\pinlabel $b_{(n-1)l}$ [b] at 315 16
	\pinlabel $b_n$ [b] at 383 45
\endlabellist

\centerline{\includegraphics[width=4.5in]{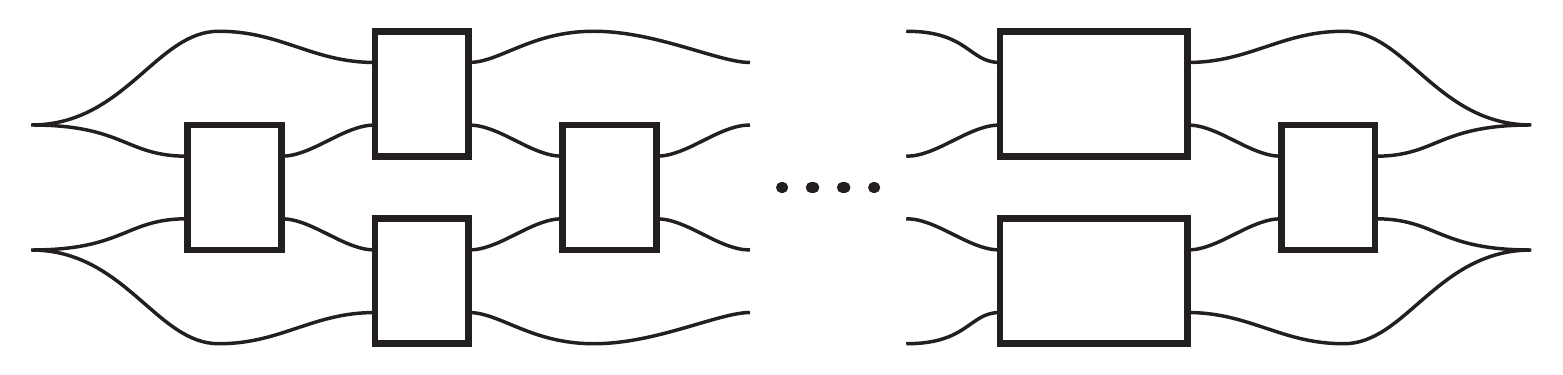}}
\caption{General form of a Legendrian $4$-plat knot in plat form.}
\label{fig:cannonical form}
\end{figure}

\subsection{Rulings}
\label{sec:rulings}

Now that we have established precise notation for describing Legendrian $4$-plat knots, we may begin our discussion of non-classical invariants of Legendrian knots. The first non-classical invariant we use to obstruct the existence of a Lagrangian filling is the number of \dfn{normal rulings}. The non-existence of a normal ruling will serve as a useful obstruction to the existence of a  Lagrangian filling.

Our definition of a ruling follows that of \cite{f-r}. Generically, all crossings in a front have distinct $x$ coordinates, and no crossing has the same $x$-coordinate as a cusp. We assume this is the case in our discussion of rulings.

\begin{defn}[\cite{chv-pushkar,fuchs:augmentations}]
Let $\leg$ be a front diagram of a Legendrian knot. A \dfn{ruling} of $\leg$ consists of the following:
\begin{enumerate}
\item A pairing of left and right cusps.
\item For each pair $l$ and $r$ of corresponding left and right cusps, a pair of piecewise smooth paths in $\leg$, disjoint except at the cusps, joining $l$ and $r$. We require that each path have strictly increasing $x$ coordinate, and that paths joining distinct pairs of cusps intersect only at crossings. The union of the two paths joining a pair of cusps bounds a \dfn{ruling disk}.
\end{enumerate}
\end{defn}

At each crossing, paths joining two distinct pairs of cusps either cross transversely or switch from one spanning arc to another. The latter type of crossing is called a \dfn{switch}. 

We  require that our rulings are \dfn{normal}. The normality condition refers to the relative heights of strands from the involved pairs of paths at switched crossings. Figure \ref{fig:normality} shows the allowed configurations of strands near switched crossings, which are exactly those configurations in which ruling disks are either disjoint or nested near a switch (this is where the requirement that crossings have distinct $x$ coordinates becomes important). Figure~\ref{fig:normality}(b) shows a normal ruling for a front of the trefoil knot.

\begin{figure}
\centerline{\includegraphics[width=4.5in]{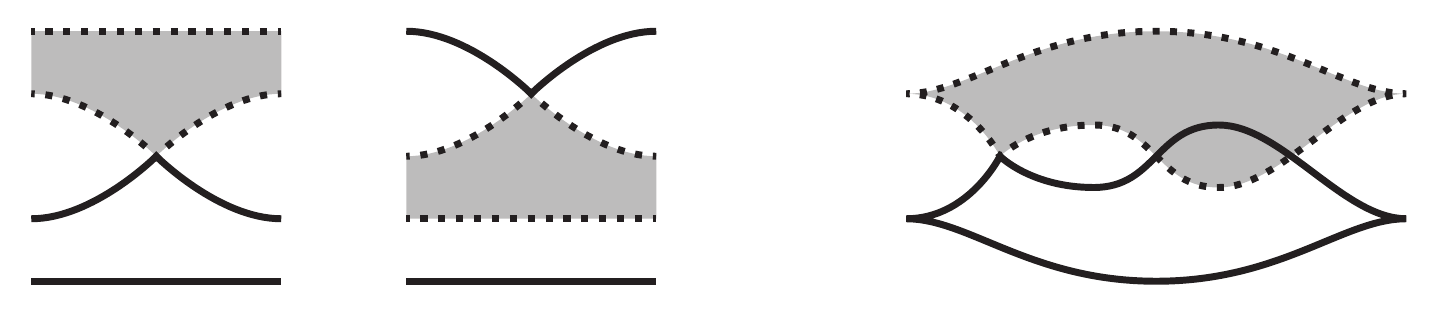}}
\caption{(a) Acceptable configurations for relative heights of strands around a switched crossing in a normal ruling; reflections about the horizontal are also acceptable. (b) A normal ruling of a Legendrian trefoil.}
\label{fig:normality}
\end{figure}

This construction is useful in Legendrian knot theory because of the following theorem:

\begin{thm}[\cite{chv-pushkar}]
The number of  normal rulings of a front is invariant under Legendrian isotopy.
\end{thm}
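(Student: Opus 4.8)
The plan is to prove the statement move-by-move. By the Legendrian analogue of Reidemeister's theorem (see \cite{etnyre:knot-intro}), two fronts represent Legendrian isotopic knots if and only if they differ by a finite sequence of Legendrian Reidemeister moves --- the front versions of the Reidemeister I, II, and III moves, and the move in which a crossing is pushed past a cusp --- together with planar isotopies of the front that change neither the cyclic order of cusps and crossings nor the over/under data. Since this relation is generated by single moves, it suffices to exhibit, for each move, a bijection between the normal rulings of the front before the move and the normal rulings of the front afterward, a bijection realized by the identity outside the small disk in which the move is performed.

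The planar isotopies are immediate: they alter neither the pairing of cusps, nor which strands meet at a given crossing, nor the set of switches, and normality is imposed locally at switches, so the set of normal rulings is unchanged. For the Reidemeister I and II moves and the crossing-past-cusp move, a short case analysis of the local picture produces the required bijection: one checks that every normal ruling of the simpler side extends, in a way determined by the normality and disjointness conditions, to a normal ruling of the more complicated side, and conversely. (Chekanov and Pushkar in fact package this into the invariance of a generating polynomial, in which each normal ruling $\rho$ is weighted by $z^{j(\rho)}$ for a combinatorial exponent $j(\rho)$ built from the numbers of switches and right cusps; evaluating at $z = 1$ returns the number of normal rulings. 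For the statement at hand the bijections above are enough.)

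The main obstacle is the Reidemeister III move, which introduces a triple point. Locally three strands are involved, and on each side of the move each of the three crossings may independently be a switch or a transverse crossing, subject to the normality constraints among the relative heights of the three strands near switched crossings. The argument is thus a finite case analysis over the height orderings of the three strands and the admissible switch patterns, verifying in each case that the normal rulings on the two sides correspond via a bijection that preserves the ruling disks outside the local disk; this check is elementary but somewhat lengthy, and is carried out in \cite{chv-pushkar}. Combining the bijections obtained for each move shows that the number of normal rulings is preserved by every Legendrian Reidemeister move, hence is invariant under Legendrian isotopy.
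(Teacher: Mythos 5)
The paper itself does not prove this statement: it is quoted from \cite{chv-pushkar}, so there is no internal argument to compare against, and your outline does follow the strategy of that reference (reduce Legendrian isotopy to the front-diagram Reidemeister moves and check invariance move by move, or equivalently invariance of the ruling polynomial). However, as a proof it has a genuine gap: the decisive content --- the finite case analyses showing that normal rulings on the two sides of each move correspond bijectively, above all for the triple-point move --- is not carried out but explicitly deferred back to \cite{chv-pushkar}. As written, the argument is a plan plus the same citation the paper already gives; nothing beyond the reduction to local moves is actually verified.

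A secondary inaccuracy is the list of generating moves. A front has no vertical tangencies, so there is no front analogue of the smooth Reidemeister I move (a loop in a front forces two extra cusps and changes the classical invariants, hence is not realized by an isotopy move). The correct generating set (see \cite{etnyre:knot-intro}) consists, up to reflections, of three moves together with planar isotopy: the triple-point move, the move in which a strand passes through a cusp creating or cancelling two crossings, and the move in which a crossing passes a cusp. A self-contained proof would enumerate, for each of these local pictures, the possible configurations of the two ruling disks and their switch patterns compatible with normality (normality is checked at each switch via the relative heights of the four strands at that $x$-coordinate, so only crossings inside the local disk need re-examination), and exhibit the bijection in each case; until that enumeration is done, the claim that ``every normal ruling of the simpler side extends, in a way determined by the normality and disjointness conditions'' is an assertion, not an argument.
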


In our formulation, the number of normal rulings is, in fact, an invariant of the underlying smooth knot:  Rutherford proved this by showing that a refined count of the number of  normal rulings can be read off of the Kauffman polynomial \cite{rutherford:kauffman}.

\subsection{Legendrian Contact Homology}
\label{ssec:LLCH}

The second non-classical invariant we will use to obstruct the existence of a Lagrangian filling is the linearized Legendrian contact homology.  We will begin with Ng's combinatorial formulation of the Chekanov-Eliashberg differential graded algebra (DGA) on front diagrams \cite{lenny:computable}; we will also draw on \cite{hr:dga}. Later on, in Section~\ref{sec:seidel}, we will use the fact that this invariant arises from the theory of $J$-holomorphic curves when we connect it to Lagrangian fillings via the Seidel isomorphism.  Note that we will only define the DGA for a front in plat form, and our gradings will be reduced modulo $2$ from the beginning.

\subsubsection{The Chekanov-Eliashberg DGA}

Given a front for Legendrian knot $\leg$, we label the crossings and right cusps with labels $a_1, \dots, a_n$. Let $A$ be the vector space generated by the labels $a_1, \ldots, a_n$ over $\zz/2$.  We turn $A$ into a $\zz/2$-graded vector space by assigning grading $0$ to all labels of positive crossings and grading $1$ to all labels of negative crossings and right cusps. The algebra $\alg$ is defined to be the unital tensor algebra over $A$:
\[ \alg = \bigoplus_{k=0}^\infty A^{\otimes k}.\]
Note that we may think of the elements of $\alg$ as linear combinations of words in the labels $a_i$.

The final step in the definition of the Chekanov-Eliashberg DGA is the differential $\df$. The computation of the differential is based on counting the \dfn{admissible disks} in the front originating at each of the generating crossings and cusps. 

\begin{defn}
For a front $\leg$ in plat form, an \dfn{admissible disk} is the image of an embedding of disk $D^2$ into the $xz$ plane having the following properties:
\begin{enumerate}
    \item The image of the boundary $\partial D^2$ is in $\leg$.
    \item The restriction of the map to $\partial D^2$ is smooth away from crossings and cusps.
    \item The map at crossings and cusps looks like one of the  configurations shown in Figure~\ref{fig:singularities}.
    \item The map has one originating singularity (at a crossing or right cusp) and one terminating singularity (at a left cusp); see Figure~\ref{fig:singularities}.
\end{enumerate}
\end{defn}

\begin{figure}
\centerline{
\includegraphics[width=4in]{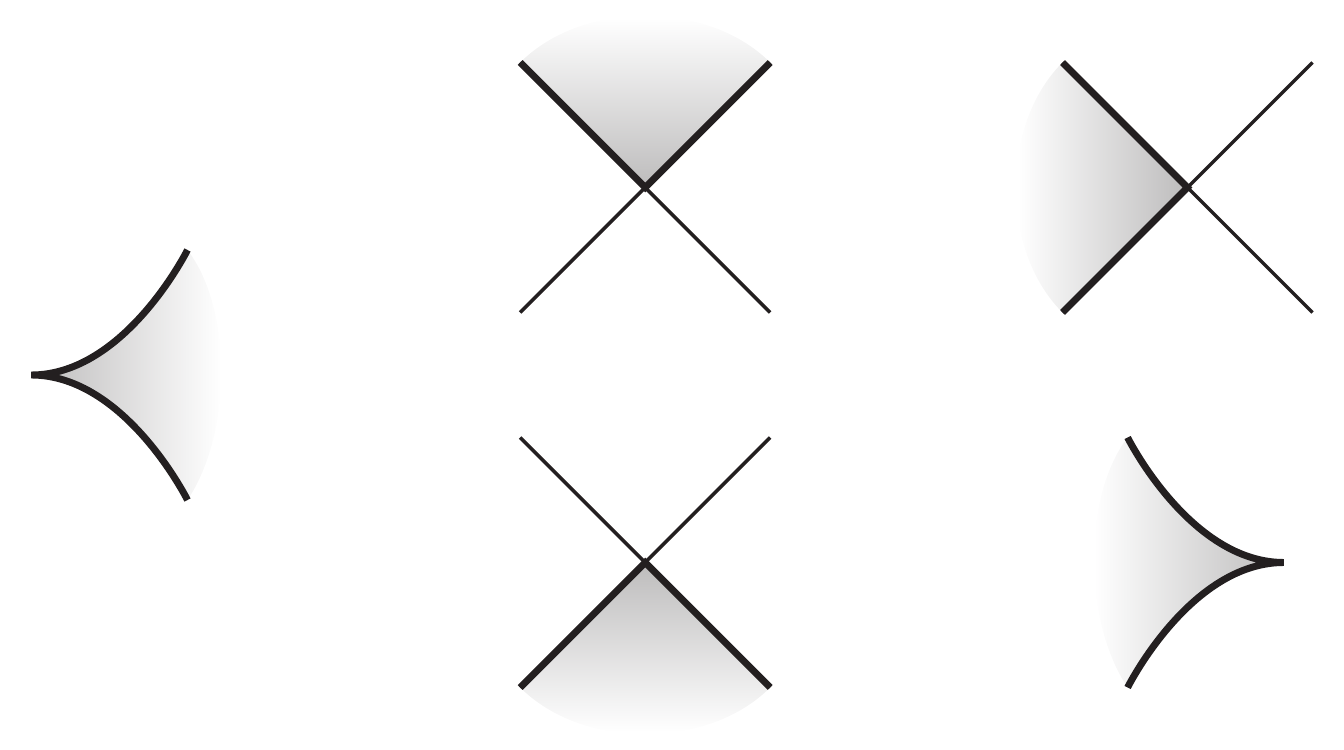}}
\caption{Allowed singularities of admissible disks: (left) terminating singularities, (center) negative corners, and (right) originating singularities or positive corners.}
\label{fig:singularities}
\end{figure}

Originating singularities are called \dfn{positive corners} of the disk and singularities at other crossings (as in figure~\ref{fig:singularities}) are called \dfn{negative corners}; we warn the reader not to confuse the sign of a corner of a disk with the sign of the underlying crossing. For any admissible disk, the originating singularity is the rightmost point of the disk and the terminating singularity is the leftmost point. Figure~\ref{fig:some_disks} shows two admissible disks in a front for the $5_2$ knot.

\begin{figure}
\centering
\includegraphics[width=4in]{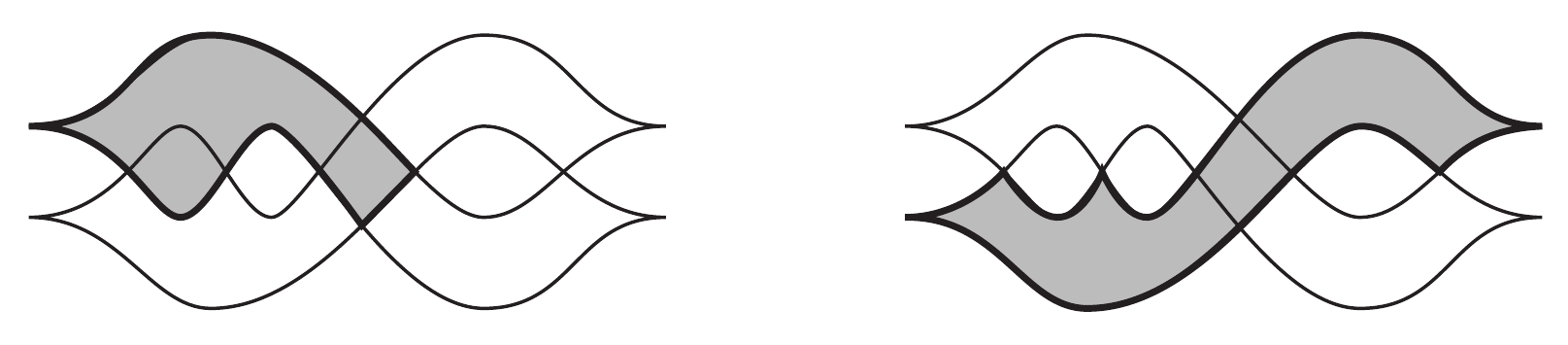}
\caption{Two admissible disks for a Legendrian $5_2$ knot.}
\label{fig:some_disks}
\end{figure}

To each admissible disk, we may associate a word $w\in \alg$ consisting of the labels of the negative corners ordered counterclockwise. A disk with no negative corners is represented by the trivial word $1$. Let $\Delta(a,w)$ be the set of admissible disks, up to reparametrization, originating at $a$ with negative corners described by $w$ in counterclockwise order starting at $a$. Let $\#\Delta(a,w)$ be the count of elements in this class modulo $2$. We can now define the differential.

\begin{defn}
Define the differential $\df: \alg \to \alg$ on a generator $a$ by
\[
  \df a =\begin{cases}
               \sum_w  \#\Delta(a,w)w & \text{if a is crossing}\\
               1 + \sum_w \#\Delta(a,w)w & \text{if a is right cusp}\\
            \end{cases}
\]
Extend $\df$ to all of $\alg$ by linearity and the Leibniz rule.
\end{defn}

That the map $\df$ is a genuine graded differential was proven by Chekanov \cite{chv} and adapted to front diagrams by Ng \cite{lenny:computable}.

% *********
\subsubsection{Linearized Legendrian Contact Homology}

While the homology of the full Chekanov-Eliashberg DGA $(\alg, \df)$ is invariant under Legendrian isotopy, its non-linear nature means that it is difficult to extract practical invariants. Chekanov's linearization technique is one important way to reduce the complexity of the DGA; see \cite{chv} for the original idea, and \cite{products,lenny:computable} for alternative expositions.

The key idea in the linearization process is to use an augmentation of $(\alg, \df)$, i.e. a DGA morphism $\aug: (\alg, \df) \to (\zz/2,0)$.  It is not always the case that there is an augmentation for the DGA \cite{fuchs:augmentations,fuchs-ishk,rulings}, but as we will note in Section~\ref{sec:seidel} below,  the Chekanov-Eliashberg DGA of any fillable Legendrian knot has an augmentation.

Given an augmentation $\aug$, we define a change of coordinates $\phi^\aug(a)=a+\aug(a)$ and a new differential on $\alg$ by
\[\df^\aug=\phi^\aug\df(\phi^\aug)^{-1} = \phi^\aug \df.\]
We define a map $\df_1^\aug: A \to A$ by restricting the domain and codomain of $\df^\aug$. It will prove useful to characterize disks that contribute to $\df_1^\aug$. If a word $a_1 \cdots a_n \in \alg$ appears in $\df a$, then the associated disk contributes $a_i$ to $\df^\aug_1 a$ if and only if $\aug(a_j) = 1$ for all $j \neq i$. We will refer to such a disk as an \dfn{augmented disk}. An augmented disk necessarily has at most one negative corner $a_i$ at a negative crossing.

It is straightforward to verify that $\df_1^\aug$ is a graded differential on the graded vector space $A$.  We define the \dfn{linearized Legendrian Contact Homology} by 
\[LCH^\aug_k(\leg) = H_k(A, \df^\aug_1).\]

\begin{thm}[\cite{chv}]
	The set of all linearized Legendrian contact homology groups taken over all possible augmentations of $(\alg, \df)$ is invariant under Legendrian isotopy.
\end{thm}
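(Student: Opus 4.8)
The plan is to deduce the invariance of the \emph{set} of linearized homology groups from the stronger fact that the full Chekanov--Eliashberg DGA $(\alg, \df)$ is invariant under Legendrian isotopy \emph{up to stable tame isomorphism}, which is precisely Chekanov's theorem \cite{chv} in the form adapted to front diagrams by Ng \cite{lenny:computable}. Recall that a stable tame isomorphism is a composition of \dfn{stabilizations} --- adjoining two generators $e_j, e_{j+1}$ of adjacent degrees with $\df e_{j+1}=e_j$, $\df e_j = 0$, and no other new differentials --- together with \dfn{tame isomorphisms}, which are compositions of elementary DGA automorphisms, each fixing every generator but one and sending $a_i \mapsto a_i + u_i$ for a word $u_i$ not involving $a_i$. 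The first step is simply to invoke this invariance statement; it encodes the hard analysis (or, in Ng's formulation, a finite check over the front Reidemeister moves), and since our theorem is attributed to \cite{chv} there is no circularity in taking it as input.

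The second step is to track how a stable tame isomorphism $\Phi$ between the DGAs of two Legendrian-isotopic fronts $\leg$ and $\leg'$ acts on augmentations. A tame isomorphism $\Phi\colon(\alg,\df)\to(\alg',\df')$ induces a bijection between augmentations, sending $\aug'$ to the pullback $\aug = \aug'\circ\Phi$. A stabilization changes the set of augmentations only through the freedom of choosing values on the new generators, and any two such choices are related by an elementary automorphism; the point to record is that, modulo this, a stable tame isomorphism produces a canonical correspondence between the augmentations of $(\alg,\df)$ and those of $(\alg',\df')$.

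The third and central step is to check that corresponding augmentations have isomorphic linearized homology. For a tame isomorphism $\Phi$ and an augmentation $\aug'$ with pullback $\aug$, conjugating $\Phi$ by the coordinate changes $\phi^\aug$ and $\phi^{\aug'}$ of the excerpt yields a DGA isomorphism $\widetilde\Phi = \phi^{\aug'}\circ\Phi\circ(\phi^\aug)^{-1}\colon(\alg,\df^\aug)\to(\alg',(\df')^{\aug'})$ with vanishing constant term. Its linear part $\widetilde\Phi_1\colon A\to A'$ is then defined, is an isomorphism of graded vector spaces because $\widetilde\Phi$ is tame, and intertwines $\df^\aug_1$ with $(\df')^{\aug'}_1$ since passing to linear parts respects composition and $\widetilde\Phi$ is a chain map; hence $\widetilde\Phi_1$ induces an isomorphism $LCH^\aug_k(\leg)\cong LCH^{\aug'}_k(\leg')$. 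For a stabilization, the new differential on $A\oplus\langle e_j,e_{j+1}\rangle$ splits off the acyclic two-step complex $e_{j+1}\mapsto e_j$, so the linearized homology is unchanged; together with the previous sentence this shows that the correspondence of the second step matches up linearized homology groups. Running these facts along the stable tame isomorphism associated to an arbitrary Legendrian isotopy, we conclude that the unordered set $\{LCH^\aug_k(\leg)\}_\aug$ depends only on the Legendrian isotopy class.

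The main obstacle is bookkeeping rather than a hard estimate: individual augmentations and individual linearized homology groups are not preserved --- the identification of augmentations on the two sides is mediated by the chosen stable tame isomorphism --- so one must carefully verify that the bijection of augmentation sets from the second step is exactly the one that matches linearized homologies in the third. Granting the stable-tame-isomorphism invariance of the DGA, the remainder is a formal consequence of the good behaviour of linear parts under tame isomorphisms and of homology under adding an acyclic summand.
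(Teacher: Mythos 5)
The paper does not prove this statement; it simply cites Chekanov \cite{chv}, and your argument is precisely the standard proof from that source: stable tame isomorphism invariance of the DGA, pullback of augmentations, conjugation by the coordinate changes $\phi^\aug$ so that the linear part becomes a chain isomorphism, and splitting off an acyclic two-step summand for stabilizations. This is correct and matches the intended argument, so there is nothing to add.
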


% **********
\section{Lagrangian Fillings of Legendrian Knots}
\label{sec:filling}

As our goal in this paper is to understand when a Legendrian $4$-plat knot has a Lagrangian filling, we now expand the class of objects under consideration from Legendrian knots to Lagrangian cobordisms between them.  In this section, we review the definition of a Lagrangian cobordism and a combinatorial construction of such a cobordism, and then describe obstructions to the existance of a Lagrangian cobordism from the empty set to a Legendrian knot.

% *****
\subsection{Lagrangian Cobordisms and Fillings}
\label{sec:lagr-cob}

We fix a contact manifold $(M, \alpha)$ and its symplectization $(\rr \times M, d(e^t\alpha))$, where $t$ is the $\rr$ coordinate.  Note that the symplectization is an exact symplectic manifold with primitive $e^t \alpha$.  

\begin{defn} \label{defn:cobordism}
A submanifold $L \subset (\rr \times M, d(e^t \alpha))$  is an \dfn{(exact, orientable) Lagrangian cobordism} from a Legendrian link $\leg_-$ to another Legendrian link $\leg_+$ if $L$ is an exact orientable Lagrangian submanifold,   there exists a pair of real numbers $r_{\pm}$ such that
\begin{align*}
L\cap ((-\infty,r_-] \times M) &= (-\infty,r_-] \times \leg_- \\
L\cap ([r_+, \infty) \times M) &= [r_+, \infty) \times \leg_+
\end{align*}
and the primitive of $e^t\alpha$ along $L$ is constant for $t < r_-$ and for $t > r_+$. 
\end{defn} 

Note that the final condition is automatically satisfied if the ends $\leg_\pm$ are connected; see \cite{chantraine:disconnected-ends}. We will drop the descriptors ``exact'' and  ``orientable'' in subsequent discussions, though we assume that they still hold.

With the notion of Lagrangian cobordism in hand, we define the central object for this paper.

\begin{defn} \label{defn:filling}
A \dfn{Lagrangian filling} of a Legendrian knot $\leg$ is a Lagrangian cobordism from the empty set to $\leg$.  A Legendrian knot is \dfn{Lagrangian fillable} if it has a Lagrangian filling.
\end{defn}

% *****
\subsection{Constructions of Lagrangian Cobordisms}

The following theorem allows us to construct Lagrangian fillings combinatorially. The statement is adapted from Theorem 4.2 of \cite{bst:construct}; similar constructions appear in \cite{rizell:surgery, ehk:leg-knot-lagr-cob}.

\begin{figure}

\labellist
	\large
	\pinlabel $\emptyset$ [b] at 32 25

	\small
	\pinlabel $0$-handle [r] at 52 90
	\pinlabel $1$-handle [r] at 235 90
\endlabellist

\centerline{\includegraphics[height=2in]{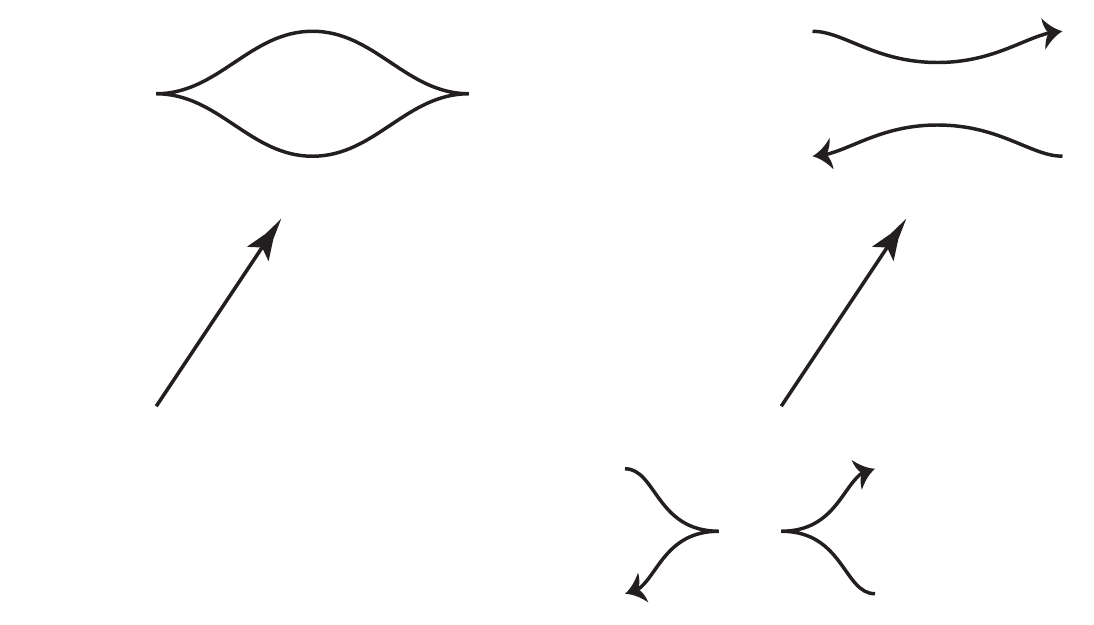}}
\caption{Attachment of a Legendrian $0$-handle (left) and $1$-handle (right). }
\label{fig:handles}
\end{figure}

\begin{thm}[\cite{bst:construct,rizell:surgery, ehk:leg-knot-lagr-cob}]
\label{thm:constructible}
If two Legendrian knots $\leg_-$ and $\leg_+$ in $(\rr^3, \xi_0)$ are related by any of the following moves, then there exists a Lagrangian cobordism from $\leg_-$ to $\leg_+$.
\begin{description}
\item[Isotopy] $\leg_+$ and $\leg_-$ are Legendrian isotopic.
\item[$0$-handle attachment] $\leg_+$ is equivalent to the disjoint union of $\leg_-$ and a Legendrian unknot as in the left side of Figure~\ref{fig:handles}.
\item[$1$-handle attachment] $\leg_+$ is related to $\leg_-$ by the attachment of a $1$-handle as in the right side of Figure~\ref{fig:handles}. Note the compatible orientations of the two cusps.
\end{description}
\end{thm}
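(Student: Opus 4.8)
The plan is to treat each of the three moves separately and, in each case, to produce an explicit Lagrangian submanifold of the symplectization $(\rr \times M, d(e^t\alpha))$ that agrees with a trivial cylinder $\rr \times \leg_\pm$ outside a compact range of the $t$-coordinate, and that contains a single ``local model'' realizing the move within that range. Since the theorem only asks for a cobordism realizing one move at a time, no concatenation of cobordisms is needed inside the proof itself; the work is entirely in building and verifying the three local models.

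For the \textbf{Isotopy} move, I would begin from a contact Hamiltonian $H_t$ generating the Legendrian isotopy $\leg_t$ from $\leg_-=\leg_0$ to $\leg_+=\leg_1$, reparametrized in $t$ so that $H_t\equiv 0$ near the endpoints of a compact interval $[r_-,r_+]$. Lifting the flow into the symplectization and taking the trace $\bigcup_t\{t\}\times\leg_t$ produces a Lagrangian cylinder; one then checks exactness and that the primitive of $e^t\alpha$ is locally constant for $t<r_-$ and $t>r_+$ precisely because $H_t$ has been turned off there. This is the standard fact that a Legendrian isotopy induces a topologically trivial exact Lagrangian cobordism.

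For the \textbf{$0$-handle} move, I would insert, far out in the $t$-direction, the standard exact Lagrangian disk in $T^*\rr^2\cong\rr^4$ whose contact boundary is a small Legendrian unknot (for instance the Lagrangian cap of a ``flying saucer'' unknot, or the graph of $df$ for a suitable $f$ vanishing to high order), and graft its lower boundary onto a trivial cylinder over $\leg_-$ together with a trivial cylinder running below the disk over the new unknot component. The verification reduces to: the local model is exact with primitive constant near its Legendrian boundary, and it matches smoothly and symplectically with the trivial cylinders where it is attached. This realizes the elementary ``minimum'' cobordism.

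The \textbf{$1$-handle} move is the step I expect to be the main obstacle. The hypothesis is that $\leg_+$ is obtained from $\leg_-$ by resolving a local configuration of two oppositely-oriented cusps facing one another (Figure~\ref{fig:handles}, right), i.e.\ by a Legendrian ambient $0$-surgery, and here I would use the local Lagrangian saddle model underlying the pinch/handle constructions of \cite{ehk:leg-knot-lagr-cob, rizell:surgery} (equivalently, the generating-family surgery of \cite{bst:construct}). One then checks four things: (i) the local model is an exact Lagrangian; (ii) outside the handle region it matches the trivial cylinders over $\leg_\pm$; (iii) the ``compatible orientations of the two cusps'' hypothesis is exactly what forces the resulting surface to be orientable (an incompatible orientation would force a M\"obius-type attachment); and (iv) the primitive of $e^t\alpha$ is constant near both cylindrical ends. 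The isotopy and $0$-handle cases are comparatively routine once their local models are on the table; the real content is writing down the Lagrangian handle and verifying exactness and orientability, after which the global cobordism in every case is simply the local model grafted onto trivial Lagrangian cylinders.
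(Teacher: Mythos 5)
You should first note that the paper does not prove this theorem at all: it is imported verbatim (``adapted from Theorem 4.2 of \cite{bst:construct}; similar constructions appear in \cite{rizell:surgery, ehk:leg-knot-lagr-cob}'') and is used as a black box in Section~\ref{sec:So}. So there is no internal proof to compare against; the relevant comparison is with the cited constructions, and your overall strategy --- trivial cylinders over $\leg_\pm$ grafted onto three local models (isotopy cylinder, minimum disk for the $0$-handle, saddle for the $1$-handle) --- is exactly the strategy of those references, whether implemented by explicit local Lagrangian handles (\cite{ehk:leg-knot-lagr-cob, rizell:surgery}) or by generating families (\cite{bst:construct}).

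There is, however, one step in your outline that fails as literally written: in the isotopy case, the naive trace $\bigcup_t \{t\}\times\leg_t$ of a Legendrian isotopy is \emph{not} Lagrangian in the symplectization. If $X_{H_t}$ is the contact vector field of the Hamiltonian $H_t$, then for $w$ tangent to $\leg_t$ one gets $d\alpha(X_{H_t},w) = -dH_t(w)$, which need not vanish along $\leg_t$, so $d(e^t\alpha)$ does not restrict to zero on the trace unless $H_t$ is locally constant on $\leg_t$. The actual constructions repair this either by correcting the suspension with a shift in the Reeb direction weighted by $H_t$ and by reparametrizing so the isotopy is sufficiently slow (this is the content of the isotopy lemmas in \cite{ehk:leg-knot-lagr-cob} and in Chantraine's concordance paper), or by working with generating families as in \cite{bst:construct}; exactness and the constancy of the primitive near the ends then have to be checked for the corrected cylinder, not the naive one. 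Your treatment of the $0$- and $1$-handle cases is fine as a plan, but be aware that there too the substantive content --- writing down the exact Lagrangian saddle and checking that its primitive is constant near both Legendrian boundaries --- is precisely what you are deferring to the cited papers, so your argument is a correct reduction to the literature rather than an independent proof.
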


See Figure~\ref{fig:filling-ex} for an example of a Lagrangian filling of a Legendrian $5_2$ knot using Theorem~\ref{thm:constructible}.

\begin{figure}
\labellist
	\large
	\pinlabel $\emptyset$ [b] at 30 120

	\small
	\pinlabel $0H$ [r] at 55 95
	\pinlabel ${R1 \times 3}$ [l] at 125 95
	\pinlabel ${R2 \times 2}$ [r] at 280 95
	\pinlabel ${1H \times 2}$ [r] at 405 95
\endlabellist

\centerline{\includegraphics[width=5in]{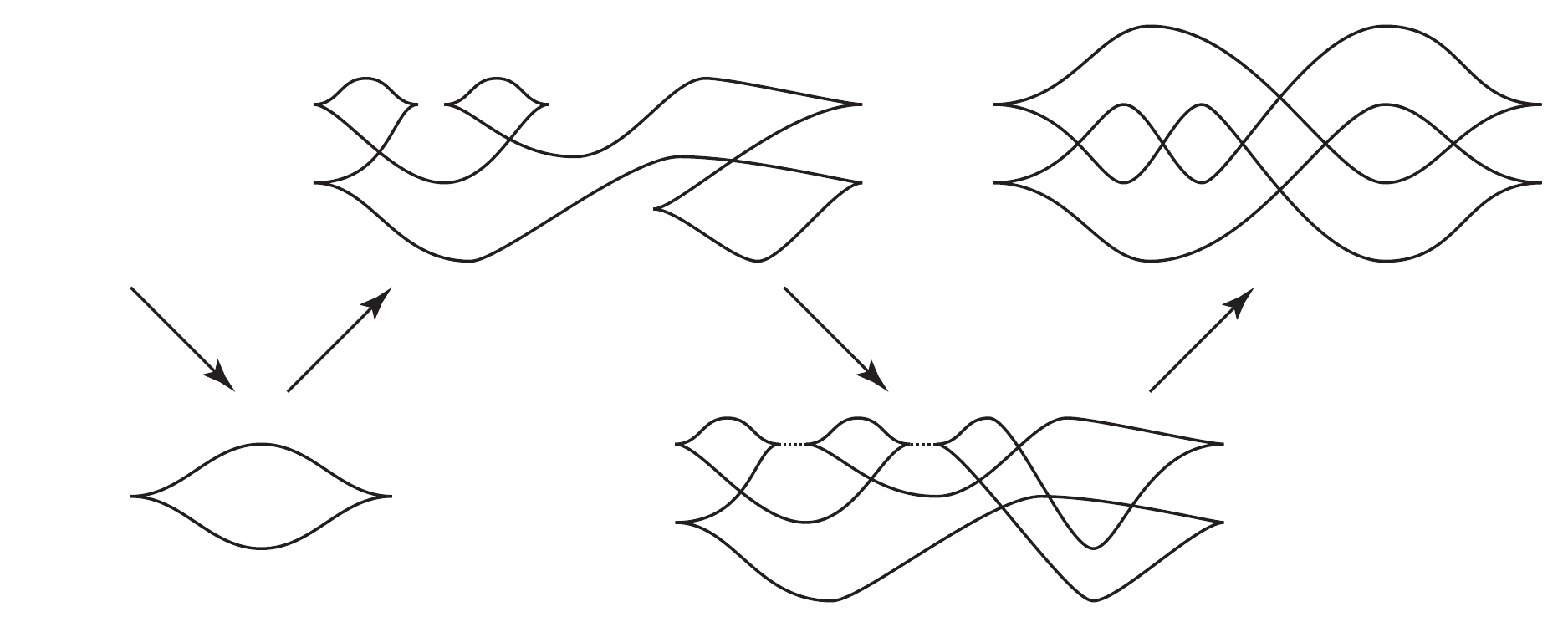}}
\caption{A Lagrangian filling of a Legendrian $5_2$ knot.}
\label{fig:filling-ex}
\end{figure}

% *****
\subsection{Obstructions to Lagrangian Fillings}
\label{sec:seidel}

In our characterization of fillable Legendrian $4$-plat knots, we will need computable obstructions to the existence of a filling. To begin, the classical invariants each give rise to easy-to-state obstructions. 

\begin{prop}[\cite{chantraine}] \label{prop:classical-obstr}
	If a Legendrian knot $\leg$ has a Lagrangian filling, then:
	\begin{enumerate}
		\item $r(\leg) = 0$ and
		\item $tb(\leg)$ is maximal.
	\end{enumerate}
\end{prop}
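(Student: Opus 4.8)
The plan is to derive this from Chantraine's computation of the classical invariants along a Lagrangian cobordism, together with the slice--Bennequin inequality.

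First I would recall Chantraine's behavior of classical invariants under exact orientable Lagrangian cobordism \cite{chantraine}: if $L$ is such a cobordism from $\leg_-$ to $\leg_+$, then the rotation numbers satisfy $r(\leg_+) = r(\leg_-)$ (the relative Maslov class of $L$ vanishes), and the Thurston--Bennequin invariants satisfy $tb(\leg_+) - tb(\leg_-) = -\chi(L)$ (here $tb$ is a relative self-linking number computed by the cobordism via its relative normal Euler number). Applying this to a Lagrangian filling $L$ of $\leg$, where $\leg_- = \emptyset$ and $r(\emptyset) = tb(\emptyset) = 0$, immediately gives $r(\leg) = 0$, which is conclusion (1), as well as the identity $tb(\leg) = -\chi(L)$.

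For conclusion (2), I would observe that a Lagrangian filling is in particular a smooth, properly embedded, oriented surface: the portion of $L$ below its cylindrical end sits in a half-symplectization, and one-point compactifying the contact boundary produces a smooth surface $\bar L \subset B^4$ with $\partial \bar L = K$ and $\chi(\bar L) = \chi(L)$. Hence $g(\bar L) \ge g_s(K)$, the slice genus, so $-\chi(L) = 2g(\bar L) - 1 \ge 2g_s(K) - 1$. On the other hand, the slice--Bennequin inequality \cite{rudolph} states that every Legendrian representative $\leg'$ of the underlying smooth knot type $K$ satisfies $tb(\leg') + |r(\leg')| \le 2g_s(K) - 1$. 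Applying this to $\leg$ and using $tb(\leg) = -\chi(L)$ and $r(\leg) = 0$ forces $2g_s(K) - 1 \le -\chi(L) = tb(\leg) \le 2g_s(K) - 1$, so equality holds throughout; in particular $\bar L$ realizes the slice genus and $tb(\leg) = 2g_s(K) - 1$. Since the same inequality gives $tb(\leg') \le 2g_s(K) - 1 = tb(\leg)$ for every Legendrian representative $\leg'$ of $K$, we conclude that $tb(\leg)$ is maximal.

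The genuinely symplectic input, and the main thing being invoked rather than reproved, is Chantraine's identity $tb(\leg_+) - tb(\leg_-) = -\chi(L)$ together with the vanishing of the relative Maslov class; these require expressing the classical invariants as relative characteristic numbers of $L$ and exploiting its exactness. Everything afterward is bookkeeping with the slice--Bennequin inequality, which is a purely smooth (gauge-theoretic) statement about $K$. I note that conclusion (1) need not be cited separately from Chantraine: once the Euler-characteristic bound $-\chi(L) \ge 2g_s(K) - 1$ is in hand, the slice--Bennequin inequality applied to $\leg$ already forces $|r(\leg)| \le 0$.
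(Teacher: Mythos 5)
Your proposal is correct, and it follows essentially the same route as the paper, which simply cites Chantraine: the identities $r(\leg)=0$ and $tb(\leg)=-\chi(L)$ for a filling, combined with the slice--Bennequin inequality $tb+|r|\le 2g_s(K)-1$, are exactly the standard argument behind the cited result (indeed this is how Chantraine deduces that the filling realizes the slice genus and that $tb$ is maximal). Your closing observation that the $tb$ formula plus slice--Bennequin already forces $r(\leg)=0$ is also valid.
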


The non-classical invariants discussed above also yield obstructions to the existence of a Lagrangian filling.  The ruling obstruction that we need is particularly simple to state; the proof appears in \cite{positivity}:

\begin{prop}  \label{prop:ruling-obstr}
	If a Legendrian knot has a Lagrangian filling, then it has a  normal ruling.
\end{prop}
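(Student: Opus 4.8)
\emph{Proof proposal.} The plan is to factor this implication through the Chekanov--Eliashberg DGA and the combinatorial dictionary between its augmentations and normal rulings. First I would invoke the geometric input, to be recalled in Section~\ref{sec:seidel} and originally due to Ekholm--Honda--K\'alm\'an \cite{ehk:leg-knot-lagr-cob} (building on the foliation/$J$-holomorphic curve machinery for Lagrangian boundary conditions): an exact orientable Lagrangian filling $L$ of $\leg$ induces an augmentation $\aug_L\colon (\alg,\df)\to(\zz/2,0)$. Concretely, one completes $L$ and counts rigid $J$-holomorphic disks in the symplectization with boundary on $L$ and a single positive puncture at a generator $a$; because $L$ has no negative end, such disks can have no further punctures, so the mod-$2$ count $\aug_L(a)$ of these disks defines a unital DGA morphism to the ground ring. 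Since the paper works over $\zz/2$ with gradings reduced mod $2$, exactness and orientability of $L$ are exactly what make this count well defined and degree-preserving; I would cite this rather than reprove it.

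Second, I would invoke the combinatorial correspondence between augmentations and normal rulings of Fuchs (and Fuchs--Ishkhanov, Sabloff, Rutherford) \cite{fuchs:augmentations,fuchs-ishk,rulings}: every augmentation of $(\alg,\df)$ determines a normal ruling of the front. The idea to recall is that, after normalizing the augmentation and the front, one declares a crossing to be a switch precisely when $\aug$ evaluates to $1$ on the corresponding generator, one pairs cusps according to the resulting spanning arcs, and then the augmentation identities $\aug\circ\df=0$ applied to the right cusps and to the crossings force the relative heights of strands at switches into exactly the admissible configurations of Figure~\ref{fig:normality}. Chaining the two steps, a Lagrangian filling of $\leg$ produces an augmentation, which produces a normal ruling, so $\leg$ has a normal ruling. (One might instead try to argue directly by decomposing the filling into the elementary pieces of Theorem~\ref{thm:constructible} and watching a normal ruling get assembled --- and indeed those constructions visibly build rulings --- but not every exact Lagrangian filling is decomposable, so this would not settle the general case.)

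The main obstacle is the first step: that a filling yields an augmentation is genuinely analytic, resting on Gromov-type compactness, transversality for the relevant moduli of $J$-holomorphic disks, and an index computation showing these moduli spaces are $0$-dimensional --- none of it combinatorial. This is, however, exactly the content that will be set up in Section~\ref{sec:seidel}, so for the purposes of this paper it can be quoted. The second step is purely combinatorial and amounts to unwinding definitions; the only care needed is to use the ungraded ($\zz/2$) version of the augmentation--ruling dictionary, which is consistent with the conventions already fixed in Section~\ref{ssec:LLCH}.
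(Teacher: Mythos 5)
Your proposal is correct and is essentially the argument the paper relies on: the paper defers this proof to \cite{positivity}, where it is exactly the chain you describe --- an exact filling induces an augmentation of the Chekanov--Eliashberg DGA (\cite{ekholm:lagr-cob}, cf.\ \cite{ehk:leg-knot-lagr-cob}), and the existence of an augmentation implies the existence of a normal ruling \cite{fuchs:augmentations,fuchs-ishk,rulings}. One small caution: your parenthetical description of the second step (declaring a switch wherever $\aug$ equals $1$ on a crossing of the given front) oversimplifies the Fuchs--Ishkhanov/Sabloff proofs, which require modifying the front (``dipping'') and normalizing the augmentation before the switches can be read off; since you cite rather than reprove that step, this does not affect the correctness of the argument.
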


The deepest of the obstructions we require comes from the linearized Legendrian contact homology. That there is a connection between the Chekanov-Eliashberg DGA and Lagrangian cobordisms has long been known, at least since the seminal work in \cite{egh}.  More concretely, Ekholm proved in \cite{ekholm:lagr-cob} that a Lagrangian filling of a Legendrian knot induces an augmentation for the knot's Chekanov-Eliashberg DGA (here, we take gradings modulo $2$ as in Section~\ref{ssec:LLCH}).   We will make use of an isomorphism between the linearized Legendrian contact homology with respect to the augmentation induced by an exact filling $L$ and the singular homology of $L$. This isomorphism was conjectured by Seidel and was proven in \cite{rizell:lifting} and \cite{ekholm:lagr-cob}. We reduce the generality of the Seidel Isomorphism to Legendrian knots in $(\rr^3, \alpha_0)$ with gradings taken modulo $2$.

\begin{thm}[Seidel Isomorphism \cite{rizell:lifting,ekholm:lagr-cob}]
\label{thrm: Siedel}
Let $\leg$ be a Legendrian knot in the standard contact $\rr^3$ with a Lagrangian filling $L$ in the symplectization $\rr \times \rr^3$.  If $\aug$ is the augmentation induced by $L$, then there is an isomorphism
\[ LCH^\aug_k(\leg) \simeq H_{k+1}(L,[r_+, \infty) \times \leg).\]
\end{thm}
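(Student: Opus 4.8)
The plan is to build an explicit chain-level map from the linearized complex $(A,\df_1^\aug)$ to a Morse complex computing $H_*(L,[r_+,\infty)\times\leg)$, to verify that it is a chain map by examining the boundaries of one-parameter families of $J$-holomorphic disks with boundary on $L$, and then to promote it to an isomorphism on homology by an action-filtration (adiabatic-limit) argument. A more structural alternative, to fall back on if the direct construction becomes unwieldy, is to obtain the isomorphism as the degenerate case of a long exact sequence attached to an arbitrary exact Lagrangian cobordism --- relating the linearized contact homologies of the two ends to the homology of the cobordism itself --- by specializing the negative end to the empty set.

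First I would fix the geometric data. Recall that $L$ is cylindrical over $\leg$ for $t\ge r_+$; choose a generic compatible almost complex structure $J$ on $\rr\times\rr^3$, cylindrical near that end, for which all relevant moduli spaces of disks with boundary on $L$ are transversely cut out. Recall also from Ekholm's construction \cite{ekholm:lagr-cob} that $\aug$ counts, modulo $2$, the rigid $J$-holomorphic disks $u\colon(D^2,\df D^2)\to(\rr\times\rr^3,L)$ having one positive boundary puncture asymptotic to a Reeb chord of $\leg$ and no negative punctures. Fix a Morse function $f$ on $L$ with no critical points on $[r_+,\infty)\times\leg$, so that its Morse complex $(CM_*,\df^{M})$ computes $H_*(L,[r_+,\infty)\times\leg)$. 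For a Reeb-chord generator $c\in A$ and a critical point $p$ of $f$, let $\mathcal{M}(c;p)$ be the moduli space of $J$-holomorphic disks with boundary on $L$, one positive puncture at $c$, finitely many additional negative punctures at Reeb chords $c_1,\dots,c_m$ each weighted by $\aug(c_j)$, and one boundary marked point constrained to lie on the stable manifold $W^s(p)$ of $p$ for a generic metric. A standard index computation shows that $\mathcal{M}(c;p)$ is rigid precisely when $p$ lies one degree above $c$ --- the source of the single degree shift in the theorem --- and I set $\Phi(c)=\sum\#\mathcal{M}(c;p)\,p$, the sum over all such $p$, obtaining a map $\Phi\colon A\to CM_*$ that raises degree by one.

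To check that $\Phi$ is a chain map I would analyze the compactification of the one-parameter families $\mathcal{M}(c;p)$, now with $p$ and $c$ in the same degree. The boundary consists of: (i) a disk contributing to $\df_1^\aug c$ --- one positive puncture at $c$, one free negative puncture at a chord $c'$, all remaining negative punctures augmented --- splitting off at the positive end and leaving a rigid disk in $\mathcal{M}(c';p)$, summing over $c'$ to give $\Phi\circ\df_1^\aug$; (ii) the boundary marked point escaping along a broken negative-gradient trajectory, i.e.\ a rigid disk in $\mathcal{M}(c;q)$ followed by a rigid Morse flow line from $q$ to $p$, giving $\df^{M}\circ\Phi$; and (iii) boundary-bubble formation and other disk degenerations, which occur in canceling pairs or are excluded once the augmentation has absorbed the index-zero disks and transversality is arranged. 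Counting boundary points modulo $2$ gives $\df^{M}\circ\Phi+\Phi\circ\df_1^\aug=0$, so $\Phi$ descends to a map $LCH^\aug_k(\leg)\to H_{k+1}(L,[r_+,\infty)\times\leg)$.

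The hard part will be showing that $\Phi$ is an isomorphism. I would put a filtration on each complex --- by the symplectic action (equivalently, the length) of the generating Reeb chords on the $LCH$ side, and by the values of $f$ on the Morse side, arranged so that $\Phi$ is filtered --- and then take an adiabatic/neck-stretching degeneration of $J$ under which the disks counted by $\Phi$ converge to rigid gradient flow trees. In this limit $\Phi$ should become a combinatorial bijection pairing each Reeb chord with a single critical point, hence an isomorphism on the associated graded and therefore on homology; this is where the content of the theorem lies. The principal obstacles are analytic: transversality and Gromov compactness for disks with arbitrarily many augmented negative punctures accumulating near the cylindrical end, excluding the boundary-bubble degenerations that would spoil the chain-homotopy identity, and controlling the flow-tree limit precisely enough to identify the associated-graded map --- precisely the analysis carried out in \cite{rizell:lifting} and \cite{ekholm:lagr-cob}. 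A final bookkeeping step confirms that the augmentation produced by $L$ in that construction agrees with the one in the statement, and that our mod-$2$, front-diagram grading conventions match theirs.
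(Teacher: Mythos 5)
The first thing to observe is that the paper contains no proof of this statement: Theorem~\ref{thrm: Siedel} is imported wholesale from \cite{rizell:lifting} and \cite{ekholm:lagr-cob}, the only in-paper content being the specialization to knots in the standard contact $\rr^3$ and the reduction of gradings modulo $2$. So there is nothing internal to compare your argument against; the relevant comparison is with the cited proofs, and your sketch does capture their general shape --- a map defined by counting disks with one positive puncture, augmented negative punctures, and a point (or flow-line) constraint on the filling, or alternatively an exact sequence attached to a general exact cobordism specialized to an empty negative end, which is in fact closer to Ekholm's actual route.

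As a proof, however, your outline has a genuine gap exactly at the step you yourself flag as the hard one. The claim that in an adiabatic limit $\Phi$ ``becomes a combinatorial bijection pairing each Reeb chord with a single critical point, hence an isomorphism on the associated graded'' cannot work as stated: the linearized complex is generated by all crossings and right cusps of the diagram, which generically far outnumber the critical points of a Morse function computing $H_{*}(L,[r_+,\infty)\times\leg)$ (whose total rank is $1+\dim H_1$ of the filling), so no pairing of generators exists, and the proposed mechanism for the associated-graded isomorphism collapses. The actual proofs do not proceed this way: Ekholm obtains the isomorphism from an exact triangle (mapping cone) relating the linearized homology of the end, the homology of the filling, and a Floer-type complex of the filling that is shown to be acyclic, while Dimitroglou Rizell lifts the holomorphic polygons of a Floer-theoretic model to the symplectization. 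Your fallback long-exact-sequence strategy is the viable one, but it is only mentioned, not developed. In addition, the chain-map verification waves away the degeneration analysis in item (iii) and, as you acknowledge, the transversality and compactness issues are deferred entirely to the references, so what you have is a plan for a proof rather than a proof --- which, to be fair, is also all the paper itself offers, since it treats the theorem as a black box.
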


As any orientable filling is homeomorphic to a punctured $n$-holed torus, the Seidel isomorphism has the following consequence: if $\leg$ is fillable, then there exists an augmentation $\aug$ such that $\leg$ has the following linearized Legendrian contact homology (again, the grading is taken modulo $2$):
\begin{enumerate}
\item $\dim LCH^\aug_1 (\leg) =1$ and
\item $\dim LCH^\aug_0 (\leg) =2n$ (for some $n\in\mathbb{N}$).
\end{enumerate}

We can characterize the generator of $LCH^\aug_1(\leg)$ using the fundamental class of \cite[\S5]{duality}.  If a front diagram for (fillable) $\leg$ has exactly two right cusps with labels $a$ and $a'$, then the generator of $LCH^\aug_1(\leg)$ is represented by a cycle of the form
\[a+a'+x,\]
where $x$ lies in the subspace of $A$ generated by the crossings of the front diagram. Further, it is evident from the definition of the differential $\df$ that the right cusps $a$ and $a'$ cannot be boundaries.

We are led to the following corollary of the Seidel isomorphism and the existence of the fundamental class, which will play a key role in the proof of the main theorem:
 
\begin{lem}
\label{lem: fundamental class}
If for all augmentations of the Chekanov-Eliashberg DGA of $\leg$, there exists a negative crossing that is a cycle but not a boundary, then $\leg$ is not fillable.
\end{lem}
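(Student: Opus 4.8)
The plan is to argue by contradiction, playing the Seidel isomorphism (Theorem~\ref{thrm: Siedel}) off against the characterization of the fundamental class recalled just above. Suppose $\leg$ were fillable, by a Lagrangian filling $L$, and let $\aug$ be the augmentation of $(\alg,\df)$ induced by $L$ (it exists by Ekholm's theorem). As noted above, the Seidel isomorphism then forces $\dim LCH_1^\aug(\leg) = 1$, and, since the front for $\leg$ under consideration has exactly two right cusps $a$ and $a'$, the unique nonzero class of $LCH_1^\aug(\leg)$ is represented by a cycle of the form $a + a' + x$, where $x$ is a $\zz/2$-linear combination of crossings.

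Now I would apply the hypothesis to this particular $\aug$: there is a negative crossing $c$ that is a $\df_1^\aug$-cycle but not a $\df_1^\aug$-boundary. Since negative crossings have grading $1$, the class $[c]$ is a nonzero element of the one-dimensional space $LCH_1^\aug(\leg)$, so $[c] = [a+a'+x]$, i.e.\ $c + a + a' + x \in \image \df_1^\aug$. To extract a contradiction I would establish the following support statement: no right cusp ever occurs in $\image \df_1^\aug$. Indeed, in the combinatorial model of Section~\ref{ssec:LLCH} a right cusp appears in an admissible disk only as the originating singularity (Figure~\ref{fig:singularities}), never as a negative corner, so no right-cusp label occurs in $\df a_i$ for any generator $a_i$; passing to $\df^\aug = \phi^\aug \circ \df$ only rescales coefficients via $\phi^\aug(a) = a + \aug(a)$ and introduces no new generators, and restricting to the linear part $\df_1^\aug$ cannot either. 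Since $c + x$ is supported on crossings, the relation $c + a + a' + x \in \image \df_1^\aug$ would force $a + a' = 0$ in $A$; but $a \neq a'$, a contradiction. Hence $\leg$ is not fillable.

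The main thing to be careful about is the bookkeeping for $\df_1^\aug$: one must check that the augmented change of variables $\phi^\aug$ really cannot manufacture a right-cusp term in the linearized differential (the constant term $1$ appearing in $\df$ of a right cusp is harmless, as it does not survive to the map $A \to A$), and that the crossing $c$ supplied by the hypothesis genuinely sits in degree $1$ so that it is forced to represent the fundamental class. Both points are routine once the grading conventions and the definition of $\df^\aug$ from Section~\ref{ssec:LLCH} are unwound, and no new $J$-holomorphic analysis is needed beyond what is already built into Theorem~\ref{thrm: Siedel} and Ekholm's existence-of-augmentation result; the whole argument is essentially a support/parity observation about right cusps layered on top of the fundamental-class computation.
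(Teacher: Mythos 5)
Your argument is correct and is precisely the paper's intended proof: the paper presents this lemma as an immediate corollary of the Seidel isomorphism, Ekholm's induced augmentation, and the fundamental class $a+a'+x$, together with the observation that right cusps never occur as negative corners and hence never appear in $\image \df_1^\aug$. Your write-up simply fleshes out that same chain of reasoning (a degree-$1$ negative-crossing cycle that is not a boundary would have to represent the fundamental class, forcing the cusp terms $a+a'$ into the image of the linearized differential, which is impossible), so there is nothing to add.
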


% **********
\section{Proof of the Main Theorem}
\label{sec:Proof}

In this section, we prove the main theorem of the paper, namely that a Legendrian $4$-plat knot is fillable if and only if each negative band has at most two crossings and each internal band has at least two crossings. Such knots, as the proof will show, have the property that their bands alternate in sign (see Corollary~\ref{cor: alternating bands}) and that if the bands at the ends are negative, then they each have one crossing.

The proof proceeds by winnowing down the possible forms of negative bands in a fillable Legendrian $4$-plat front, until only knots of the desired form remain. We then show that a knot in this form is fillable.

\begin{figure}
\labellist
	\pinlabel (a) [b] at 42 0
	\pinlabel (b) [b] at 142 0
	\pinlabel (c) [b] at 242 0
	\pinlabel (d) [b] at 342 0
\endlabellist

\centerline{\includegraphics{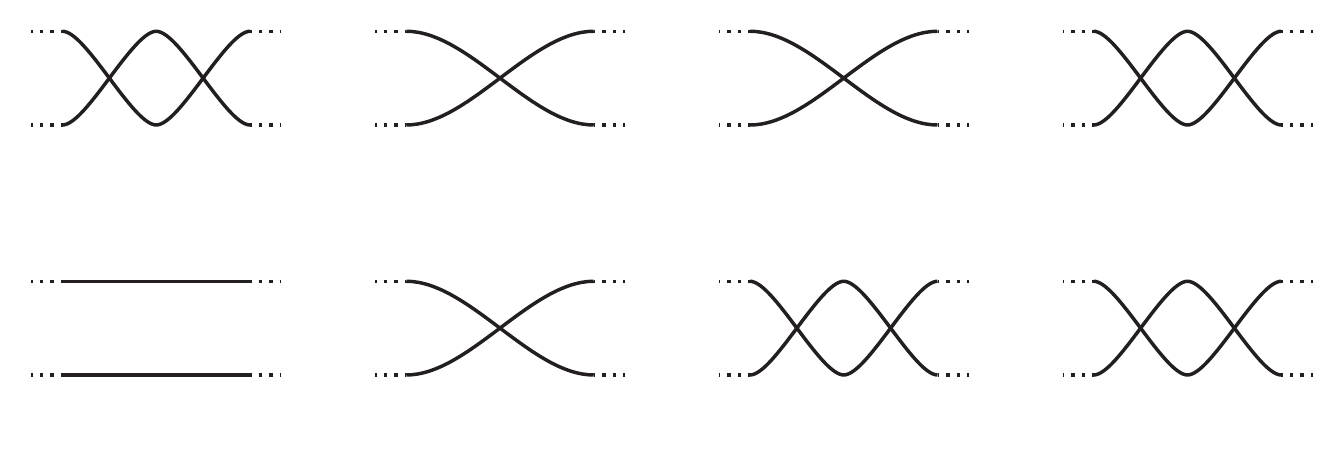}}
\caption{Side bands, excluding single and long bands: (a) double, (b) split double, (c) split triple, and (d) split quadruple.}
\label{fig:bands}
\end{figure}

We first create a taxonomy for describing bands by their number of crossings and the distribution of these crossings across sub-bands:
\begin{enumerate}
    \item A \dfn{single} band is a band with only one crossing.
    \item A \dfn{long} sub-band is a sub-band with three or more crossings. A long band is a band containing a long sub-band.
    \item A \dfn{double} band is a band with exactly two crossings. A \dfn{split double} is a split band with one crossing in each sub-band; see Figure~\ref{fig:bands}(a,b).
    \item A \dfn{split triple} is a split band with two crossings in one sub-band and a single crossing in the other; see Figure~\ref{fig:bands}(c).
    \item A \dfn{split quadruple} is a split band with two crossings in each sub-band; see Figure~\ref{fig:bands}(d).

\end{enumerate}

In Sections~\ref{sec:Ut} through \ref{sec:Fa}, we show that a fillable Legendrian $4$-plat knot  cannot contain a single internal band, a negative long band, a negative split triple, or a negative split quadruple, respectively. A knot not subject to any of the above obstructions is of the form specified by the main theorem. In Section~\ref{sec:So} we prove that a knot of this form is indeed fillable.

% *****
\subsection{Eliminating Internal Single Bands}
\label{sec:Ut}

We begin the winnowing process by eliminating internal single bands.

\begin{lem}
\label{lem: Ut}
A Legendrian $4$-plat knot with an internal single band (either positive or negative) is not fillable.
\end{lem}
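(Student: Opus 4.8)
The plan is to show that an internal single band forces a failure of one of the obstructions from Section~\ref{sec:filling} --- most likely the ruling obstruction (Proposition~\ref{prop:ruling-obstr}) or, failing that, the fundamental-class obstruction (Lemma~\ref{lem: fundamental class}). The key structural observation is that an internal single band $b_i$ (with $1<i<n$) contributes a single crossing; I would first analyze what a normal ruling must do at that crossing. Because the band is internal and isolated (a single crossing flanked on both sides by the rest of the diagram), the two strands through that crossing either switch or cross transversely, and I would argue that the normality condition, together with the fact that the neighboring bands attach to these same strands, pins down the local picture so tightly that no globally consistent pairing of the four cusps survives. Concretely, I would set up the ruling as a pairing of the two left cusps with the two right cusps and track, band by band, which spanning arcs the ruling disks occupy; a single internal band does not give enough room to reconcile the forced switch/non-switch behavior with the cusp pairing, so there is no normal ruling and the knot is not fillable.

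The steps, in order, are: (1) Put the front in plat form as in Figure~\ref{fig:cannonical form} and fix notation for the single internal band $b_i$ and its two adjacent bands; recall that $tb$ must be maximal (Proposition~\ref{prop:classical-obstr}), which constrains the signs and positions of the end bands. (2) Enumerate the possible local ruling configurations at the single crossing of $b_i$, invoking Figure~\ref{fig:normality}: either a switch or a transverse crossing, with the normality condition restricting the relative heights. (3) Propagate each local choice leftward and rightward through the diagram, using the fact that a single crossing cannot ``absorb'' a height discrepancy the way a multi-crossing band can, to derive a contradiction with the requirement that the two ruling paths out of each left cusp reach a matched right cusp with strictly increasing $x$-coordinate and disjoint-or-nested disks. (4) Conclude that no normal ruling exists, hence by Proposition~\ref{prop:ruling-obstr} the knot is not fillable. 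If the ruling count turns out not to vanish in some sub-case (e.g.\ when the single band is positive and the flanking bands are large), I would fall back to computing $\df$ near $b_i$ and showing that the lone negative crossing --- or a nearby crossing forced by the algebra --- is a cycle but not a boundary for every augmentation, applying Lemma~\ref{lem: fundamental class}; note this requires checking positive single bands too, where the crossing has grading $0$, so the argument there must instead show such a band cannot occur in a $tb$-maximal fillable front or forces a ruling obstruction.

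The main obstacle I anticipate is the case analysis in step (3): ``internal'' allows the single band to sit anywhere strictly between the ends, and the adjacent bands may be side bands (possibly split) or center bands of various sizes and signs, so the bookkeeping of which sub-bands the ruling paths traverse could balloon. The cleanest way to control this is probably to argue locally --- show that the pair of strands entering and leaving the single internal band must, for a normal ruling, be in a configuration that is already inconsistent regardless of the rest of the diagram --- rather than doing a genuinely global propagation; if that local argument works it collapses the case analysis to the handful of configurations in Figure~\ref{fig:normality}. A secondary subtlety is making sure the positive-band subcase is genuinely handled, since the title of the lemma explicitly includes it, and the ruling obstruction alone may not suffice there without also invoking maximality of $tb$ or the structure of admissible disks.
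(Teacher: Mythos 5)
Your overall route---rule out a normal ruling at the single internal crossing via a switch/non-switch case analysis and then invoke Proposition~\ref{prop:ruling-obstr}---is exactly the paper's, and your instinct that a purely local argument should suffice is correct. But the proposal stops short of the point where the actual work happens: you never exhibit the contradiction in either case, and you hedge with a possible global ``propagation'' and an LCH fallback. The missing ingredient is the pigeonhole supplied by the $4$-plat hypothesis: a ruling of a $4$-plat has exactly \emph{two} ruling disks, so the four strands over a generic $x$-coordinate are distributed two-and-two among the boundary paths of those disks. With that in hand, the argument is entirely local to the configurations of Figure~\ref{fig:singlesa}. If the crossing of the single band is switched, normality forces the ruling path leaving the switch on top to lie in the same ruling disk as the strand immediately above it (there is no third disk available), and those two strands visibly cross each other, contradicting the requirement that the two paths bounding a ruling disk be disjoint away from the cusps. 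If the crossing is not switched, the adjacent crossing of the neighboring band forces the two strands through the single crossing into distinct ruling disks, and then a nearby strand meets strands of both disks transversely, so it can belong to neither---impossible when only two disks exist (in the other configuration, three strands would have to be absorbed by one remaining disk, which carries only two).

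Two smaller points. The propagation you worry about in step (3) is unnecessary: the contradictions above involve only the single crossing and the crossings of the adjacent bands, so there is no global bookkeeping and no case explosion over the sizes and signs of the other bands. And your concern about the positive-band subcase dissolves for the same reason: the ruling argument never uses the sign of the crossing (both signs occur among the configurations of Figure~\ref{fig:singlesa} and their reflections), so neither $tb$-maximality nor the Lemma~\ref{lem: fundamental class} fallback is needed.
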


\begin{figure}
\labellist
	\pinlabel $A$ [b] at 45 0
	\pinlabel $B$ [b] at 185 0
\endlabellist

\centerline{\includegraphics[height=1.5in]{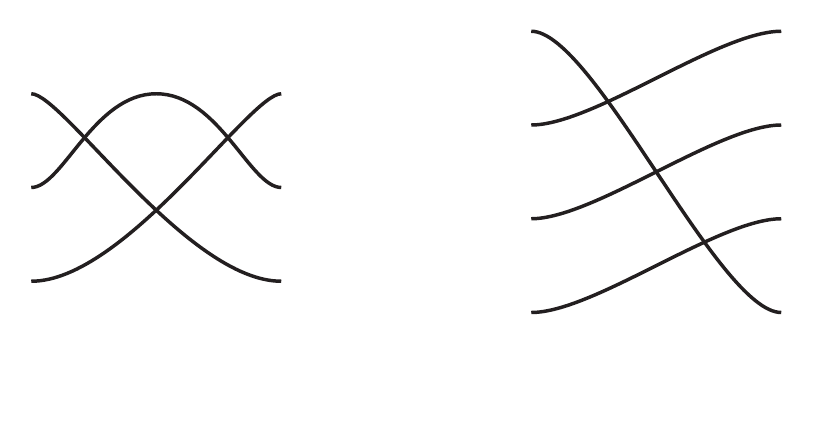}}
\caption{Configurations for internal bands with single crossing. For the configuration on the left, the remaining strand may be either above or below the portion of the knot shown in the diagram.}
\label{fig:singlesa}
\end{figure}

\begin{figure}
\labellist
	\pinlabel $A$ [b] at 45 0
	\pinlabel $B$ [b] at 185 0
\endlabellist

\centerline{\includegraphics[height=1.5in]{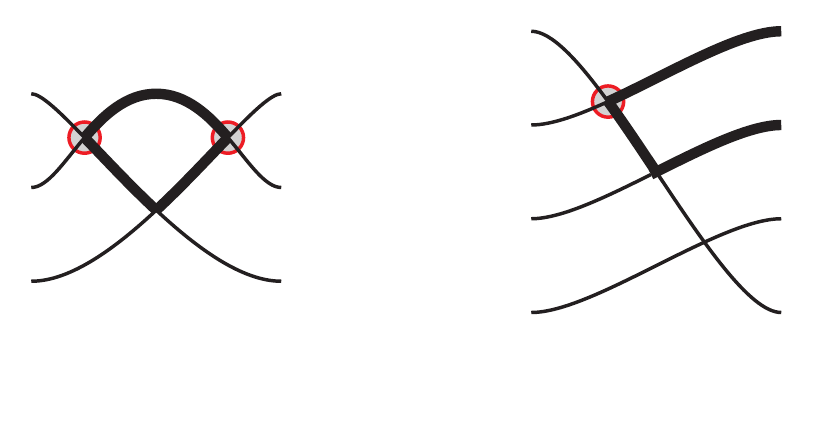}}
\caption{Attempt at a ruling near single crossing with the crossing switched.}
\label{fig:singlesb}
\end{figure}

\begin{figure}
\labellist
	\pinlabel $A$ [b] at 45 0
	\pinlabel $B$ [b] at 185 0
	\pinlabel ? [b] at 28 65
\endlabellist

\centerline{\includegraphics[height=1.5in]{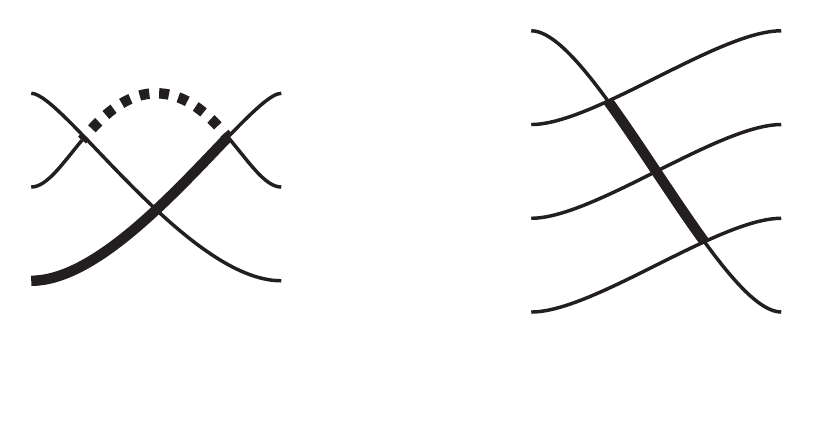}}
\caption{Attempt at a ruling near single crossing with the crossing not switched. }
\label{fig:singlesc}
\end{figure}

\begin{proof}
Suppose that $\leg$ is fillable and contains an internal single band $b$. We will derive a contradiction to Proposition~\ref{prop:ruling-obstr} by showing that $\leg$ does not have a normal ruling. To set up the proof, note that near the band $b$, the front must look like one of the diagrams in Figure~\ref{fig:singlesa} or its reflection over a horizontal line. We refer to the configurations on the left and right of the figure as $A$ and $B$, respectively. 

First suppose that there is a switch at the  crossing of the band $b$, as in Figure~\ref{fig:singlesb}. In configuration $A$, the normality condition implies that the upper ruling path at the switch must belong to the same ruling disk as the strand immediately above it (recall that a $4$-plat knot has only two ruling disks). Both of these strands are thickened in the diagram on the left side of the figure. The two thickened strands must intersect each other, which contradicts our definition of ruling disks. An identical argument can be carried out for configuration $B$, as illustrated in the image on the right side of the figure. Thus, neither configuration permits a ruling where the crossing of $b$ is switched.

Next suppose that the crossing of $b$ is not switched, as in Figure~\ref{fig:singlesc}. In configuration $A$, the thickened strand and the dotted strand must belong to distinct ruling disks since if they belonged to a single disk, then this disk would have a self-intersection at the right-hand crossing. The strand marked with a question mark, then, cannot belong to either the thickened or dotted disk, since it intersects strands from each of the disks transversely. This is a contradiction since a ruling of a $4$-plat knot has only two ruling disks. Similarly, in configuration $B$, Three of the four strands at the right of the diagram cannot be in the same ruling disk as the thickened strand. This is a contradiction since only two of these strands can belong to the other ruling disk. Thus neither configuration permits a ruling where the crossings of $b$ are not switched.
\end{proof}

% *****
\subsection{Eliminating Negative Long Bands}
\label{sec:Re}

The next step in the process is to eliminate the possibility of a negative long band.  

\begin{lem}
\label{lem: Re}
A Legendrian $4$-plat knot with a negative long band is not fillable.
\end{lem}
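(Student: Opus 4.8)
The plan is to argue by contradiction using Lemma~\ref{lem: fundamental class}: assuming $\leg$ is a fillable Legendrian $4$-plat with a negative long band $b$, I want to show that for every augmentation of the Chekanov–Eliashberg DGA there is a negative crossing that is a cycle but not a boundary. The starting point is local: in a negative long sub-band, there are at least three consecutive negative crossings stacked between the same two strands. I would first analyze the differential $\df$ and the linearized differential $\df_1^\aug$ near such a stack. Because a negative crossing has grading $1$, an augmented disk can have at most one negative corner, and the admissible disks incident to the crossings of $b$ are highly constrained by the plat form: the only disks touching a crossing in the interior of a long sub-band are the small bigon-type disks running between adjacent crossings in the same sub-band (together with possibly one larger disk anchored at the band's endpoints). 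The key computation is to show that at least one of the interior negative crossings of the long sub-band — say the ``middle'' one, which sees no cusp and no other band — is closed under $\df_1^\aug$ for \emph{every} augmentation, simply because the disks that could contribute to its differential either cancel in pairs or require an inadmissible configuration.

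Next I would handle the ``not a boundary'' half. Here the idea is to use a grading/parity or a filtration argument: order the crossings of the long sub-band from one end to the other, and observe that in $\df^\aug$ the only way a word can produce that middle crossing $a_i$ is from a disk anchored above it, but such a disk must itself have a negative corner at a crossing \emph{outside} the sub-band, and tracing through the normality/plat constraints (much as in the proof of Lemma~\ref{lem: Ut}) shows no such admissible disk exists. Concretely I expect to exhibit a linear functional on $A$ that vanishes on $\img \df_1^\aug$ but is nonzero on $a_i$; the natural candidate is the coefficient-of-$a_i$ functional composed with the observation that no generator's differential contains $a_i$. This would make $a_i$ a nontrivial class in $LCH_1^\aug$ — but wait, $a_i$ has grading $1$ and the fundamental-class discussion says $\dim LCH_1^\aug = 1$ with the generator of the form $a + a' + x$ where $a,a'$ are the two right cusps; so if in addition $a_i$ is not a boundary we get a \emph{second} independent class in degree $1$ (or, if $\leg$ is fillable, $a_i$ would have to equal the cusp-type generator, which it cannot since $a_i$ is a single crossing and the $a_i$-coefficient of the fundamental class forces $a_i$ to be hit by $\df$ of something). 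Either way we contradict fillability, which is exactly the content of Lemma~\ref{lem: fundamental class}.

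The main obstacle I anticipate is the case analysis for \emph{which} crossing of the long band to designate as the ``surviving cycle,'' and making the non-boundary claim robust across all augmentations rather than just one. The subtlety is that an augmentation can send various crossings to $1$, changing $\df^\aug$ substantially, so I need the chosen crossing $a_i$ to be immune to this: its differential and the differentials landing on it must be controlled \emph{independently} of the augmentation values. I expect this forces a careful choice — perhaps the topmost or bottommost interior crossing of the long sub-band, or the one adjacent to a cusp — and the verification that the only admissible disks involved are the elementary ones between consecutive crossings in the sub-band, whose contributions to $\df_1^\aug$ pair up or vanish for parity reasons. A secondary technical point is ensuring the argument genuinely uses ``long'' (three or more crossings): with only two crossings in the sub-band the cancellation/survival phenomenon should fail, which is consistent with the main theorem allowing negative double bands. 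I would organize the write-up as: (1) set up the local picture of a negative long sub-band and list all admissible disks touching its crossings; (2) identify the surviving crossing $a_i$ and show $\df_1^\aug a_i = 0$ for all $\aug$; (3) show $a_i \notin \img \df_1^\aug$ for all $\aug$; (4) invoke Lemma~\ref{lem: fundamental class} to conclude non-fillability.
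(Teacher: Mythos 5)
Your overall strategy --- find, for every augmentation, a negative crossing in the long band that is a cycle but not a boundary, and then invoke Lemma~\ref{lem: fundamental class} --- is exactly the paper's, but the execution has a genuine gap in the ``not a boundary'' half, and it stems from a misreading of which disks interact with the band. In the plat-form front model used here, the small ``bigon-type'' disks between adjacent crossings of a band are \emph{not} admissible (they would terminate at a crossing rather than at a left cusp; for the same reason no admissible disk originates at any crossing other than the leftmost one of its band, which is the real reason your chosen crossing is a cycle --- no cancellation in pairs is involved). The disks that actually matter are large disks originating at generators far to the right of the band, sweeping leftward to a left cusp, whose boundary weaves through the band and takes a single negative corner at one of its crossings (a single corner, because an augmented disk can have at most one negative corner at a negative crossing). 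Which crossings of the band such a disk can corner at depends on how its boundary meets the right end of the band: if it enters hugging the band it corners at the first, third, fifth, \ldots\ crossing from the right, but if it enters ``fat'' (with the other band strand inside the disk) it can corner at the second, fourth, \ldots\ crossing --- in particular at the middle crossing of a triple. So your key claim that no generator's differential contains the middle crossing $a_i$ is unsubstantiated, and the vague appeal to ``normality/plat constraints as in Lemma~\ref{lem: Ut}'' does not address these disks (that lemma is about rulings, not holomorphic/admissible disks).

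The paper sidesteps this entirely with a pairing rather than a vanishing statement. Take $a$ to be the rightmost crossing of the long band and $a'$ the third from the right (this is where ``long'' is used). Any augmented disk with a corner at $a$ can be modified, without changing it outside the band, into one whose boundary passes transversally through the two rightmost crossings and corners at $a'$ instead, and this is a bijection between augmented disks with a corner at $a$ and those with a corner at $a'$. Hence for every generator $b$ one has $\df_1^\aug b = c(a+a') + x$ with $x$ supported away from $a,a'$, so every element of $\img \df_1^\aug$ has equal coefficients on $a$ and $a'$; since $a$ is a cycle (no disks originate at it) but cannot equal such a combination, it is not a boundary, and Lemma~\ref{lem: fundamental class} applies. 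To repair your write-up you would either have to genuinely rule out the ``fat-entry'' disks hitting your chosen crossing for every augmentation --- which is not a purely local statement --- or switch to this coefficient-matching argument.
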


\begin{proof} Suppose that the front of a fillable Legendrian $4$-plat $\leg$ contains a negative long band as in Figure~\ref{fig:long bands}. We will derive a contradiction by showing that the hypotheses of Lemma~\ref{lem: fundamental class} hold:  for any augmentation of the Chekanov-Eliashberg DGA of $\leg$, there exists an odd degree (i.e.\ negative) crossing that is a cycle but not a boundary.

We briefly set notation.  Let $\aug$ be an augmentation. Let $a$ denote the rightmost crossing in the long negative band and let $a'$ denote the crossing that lies third from the right of the band. 

Clearly, there are no admissible disks originating at $a$, so $a$ is a cycle. To show that $\leg$ is not fillable using Lemma~\ref{lem: fundamental class}, we need to show that $a$ is not a boundary.  In fact, we show that for any generator $b$, its linearized differential is of the form 
\begin{equation}
\label{eq: augmented differential}
\df_1^\aug b = c(a+a')+x,
\end{equation}
where $c \in \zz/2$ and $x$ lies in the subspace of $A$ generated by all crossings except $a$ and $a'$.  

\begin{figure}
\labellist
	\pinlabel $a'$ [b] at 37 58
	\pinlabel $a$ [b] at 110 58
	\pinlabel $a'$ [b] at 200 58
	\pinlabel $a$ [b] at 273 58
\endlabellist

\centerline{\includegraphics{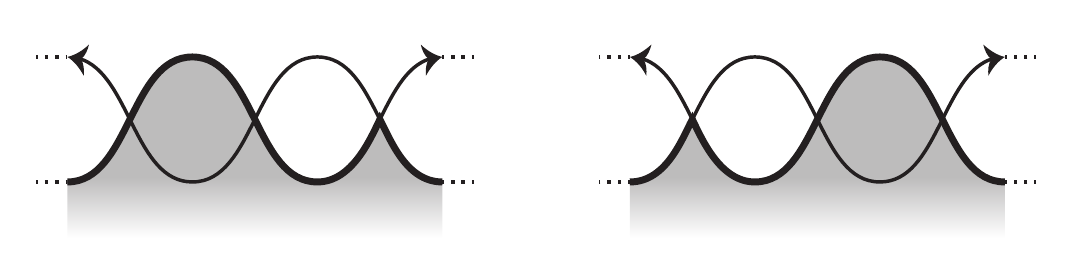}}
\caption{A disk with a corner at $a$ and the corresponding disk with a corner at $a'$. A reflection of this picture about a horizontal line is also possible.}
\label{fig:long bands}
\end{figure}

To see that Equation~(\ref{eq: augmented differential}) holds, simply note that augmented disks with a corner at $a$ or at $a'$ cannot have another corner in the long negative band.  Thus, Figure~\ref{fig:long bands} demonstrates that there is a one-to-one correspondence between augmented disks with a corner at $a$ and those with a corner at $a'$; Equation~(\ref{eq: augmented differential}), and hence the lemma, follows.
\end{proof}

\subsection{Eliminating Negative Split Triples}
\label{sec:Mi}

Combining Lemmas \ref{lem: Ut} and \ref{lem: Re}, we have now established that every internal negative central band of a fillable Legendrian $4$-plat knot has exactly two crossings, and also that a negative internal band with an odd number of crossings is necessarily a split triple. We now eliminate the possibility of split triples. 

\begin{lem}
\label{lem: Mi}
A Legendrian $4$-plat knot with a negative split triple is not fillable.
\end{lem}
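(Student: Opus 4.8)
The plan is to follow the same strategy that worked for long bands in Lemma~\ref{lem: Re}: I would try to show that for any augmentation $\aug$ of the Chekanov-Eliashberg DGA of a Legendrian $4$-plat $\leg$ containing a negative split triple, there is a negative crossing that is a cycle but not a boundary, so that Lemma~\ref{lem: fundamental class} applies. First I would set up notation for the split triple, writing $b_i = (b_{iu},b_{il})$ with, say, two crossings in the upper sub-band and one in the lower (the other case being symmetric under reflection). I would label the rightmost crossing of the two-crossing sub-band $a$, and look for a companion crossing $a'$ — most naturally the single crossing in the other sub-band, or a crossing adjacent to it — so that augmented disks with a corner at $a$ are in bijection with those with a corner at $a'$. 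As before, the key local fact is that an augmented disk has at most one negative corner at a negative crossing, so no augmented disk can have corners at two of the three crossings of the split triple; this should force $\df_1^\aug b$ to have the form $c(a+a')+x$ for every generator $b$, with $x$ supported away from $a$ and $a'$, whence $a$ cannot be a boundary. Since there are clearly no admissible disks originating at $a$ (it is the rightmost crossing of its sub-band), $a$ is a cycle, and Lemma~\ref{lem: fundamental class} gives non-fillability.

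The main obstacle I anticipate is that the bijection between augmented disks at $a$ and at $a'$ is less transparent than in the long-band case, because the two relevant crossings now live in \emph{different} sub-bands of the side band rather than in a single horizontal row. In Lemma~\ref{lem: Re}, sliding a disk's corner from $a$ to $a'$ was a purely local move within one band; here I will need to understand how a disk passing through the split triple region interacts with the strand that separates the two sub-bands, and check that passing from the upper sub-band's crossing to the lower sub-band's crossing does not change the set of other negative corners (so the augmentation weight of the disk is preserved) and does not create or destroy a terminating/originating singularity. I would handle this by a careful case analysis of the allowed configurations near a split triple — essentially the analogue of Figure~\ref{fig:long bands}, but now with a picture showing a disk with a corner in each sub-band — arguing that the embeddedness and the "one positive corner, one terminating cusp" conditions are preserved under the correspondence.

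There is a secondary subtlety: I should make sure the companion crossing $a'$ really has odd grading (is negative), which it does since it lies in the same negative side band, and that $a'$ is not itself forced to behave badly. If the naive choice of $a'$ as the single crossing of the opposite sub-band does not yield a clean bijection, a fallback is to choose $a$ and $a'$ to be the two crossings of the \emph{two-crossing} sub-band together with some accounting for the lone crossing — but I expect the cleaner statement, mirroring Equation~\eqref{eq: augmented differential}, to go through. I would also remark, as the authors do before Lemma~\ref{lem: Mi}, that by Lemmas~\ref{lem: Ut} and~\ref{lem: Re} we already know the only remaining way for a negative internal band to have odd crossing number is to be a split triple, so eliminating this case (together with the split-quadruple case in Section~\ref{sec:Fa}) completes the reduction to bands with at most two crossings.
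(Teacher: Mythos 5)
There is a genuine gap, and it sits exactly where you flagged your ``main obstacle'': the claimed bijection between augmented disks with a corner at $a$ (rightmost crossing of the two-crossing sub-band) and those with a corner at $a'$ (the lone crossing of the other sub-band) is asserted, not proved, and it is not a routine adaptation of Lemma~\ref{lem: Re}. In the long-band case the pairing works because $a$ and $a'$ lie in the \emph{same} sub-band between the same pair of strands, so the two disks literally coincide outside the band and differ only in a small region inside it. For a split triple, $a$ and $a'$ involve disjoint pairs of strands at different heights; a disk that corners at $a$ and passes transversally through $a'$ has a different boundary to the \emph{left} of the band than a disk that passes through $a$ and corners at $a'$ (cornering versus passing through puts the boundary on different strands afterwards), so there is no shared exterior and no evident correspondence. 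Consequently you cannot conclude the analogue of Equation~\eqref{eq: augmented differential}, and hence cannot conclude that $a$ fails to be a boundary, from the local picture alone.

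The paper's own proof is evidence that no purely local argument at the triple exists: its local obstruction (Proposition~\ref{lem: config}) requires the extra hypothesis that the band immediately preceding the split triple is \emph{negative}, and when the preceding band is positive (Case 3 of the proof) the non-fillability is established globally --- either $r(\leg)\neq 0$ (Proposition~\ref{prop:classical-obstr}), or Proposition~\ref{lem: George} forces a second split triple further right that \emph{is} preceded by a negative band, to which Proposition~\ref{lem: config} applies. Getting to that case analysis requires the combinatorial structure results on how band signs propagate (Propositions~\ref{lem: Fred}, \ref{lem: George}, and \ref{prop: alternating}), none of which appear in your proposal. So the missing ideas are: (i) a correct local statement, which needs a hypothesis on the neighboring band and uses a crossing of that neighboring band (not a pairing internal to the triple); and (ii) the global sign/orientation bookkeeping that shows every $4$-plat containing a split triple either exhibits that local configuration somewhere or is already killed by a classical obstruction. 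Your observation that $a$ is a cycle, and that an augmented disk has at most one corner at a negative crossing, are both fine, but they do not carry the argument without these additional inputs.
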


Our proof of Lemma~\ref{lem: Mi} requires somewhat more work than those for the previous two lemmas.  We do obtain a side benefit from the extra work, however, as we will  also obtain the following result concerning the global structure of fillable $4$-plat knots.

\begin{cor}
\label{cor: alternating bands}
If $\leg$ is a fillable Legendrian $4$-plat knot, then the bands of $\leg$ strictly alternate in sign. 
\end{cor}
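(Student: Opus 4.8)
The plan is to argue by contradiction. Suppose $\leg$ is fillable yet has two consecutive bands $b_i$ and $b_{i+1}$ of the same sign. Since bands of odd index are center bands and bands of even index are side bands, such a pair consists of one center band and one adjacent side band, and by the left--right symmetry of the problem (realized on fronts by the $180^\circ$ rotation $(x,y,z)\mapsto(-x,y,-z)$, a contactomorphism of the standard contact $\rr^3$ preserving fillability, band types and crossing signs) I may assume that $b_i$ is a center band lying immediately to the left of the side band $b_{i+1}$. There are then two cases, according to whether $b_i$ and $b_{i+1}$ are both negative or both positive, and in each case the contradiction will come from the obstructions assembled in Section~\ref{sec:seidel}.

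First I would treat the case in which $b_i$ and $b_{i+1}$ are both negative. By Lemma~\ref{lem: Ut} every internal band has at least two crossings, by Lemma~\ref{lem: Re} no negative band is long, and by Lemma~\ref{lem: Mi} no negative band is a split triple; consequently each of $b_i$ and $b_{i+1}$ has either one crossing (only if it is $b_1$ or $b_n$) or exactly two crossings, so the front near $b_i \cup b_{i+1}$ is one of a short, explicit list of local pictures. For each of these I would run the normal-ruling analysis already used in the proof of Lemma~\ref{lem: Ut}: a normal ruling of a $4$-plat has only two ruling disks, and in every one of these local pictures --- for every choice of which crossings in $b_i$ and $b_{i+1}$ are switched --- at least one strand entering or leaving $b_i\cup b_{i+1}$ is forced into a third ruling disk, contradicting Proposition~\ref{prop:ruling-obstr}. (If some configuration resists the ruling argument, the fallback is Lemma~\ref{lem: fundamental class}, exactly as in Lemmas~\ref{lem: Re} and \ref{lem: Mi}: the rightmost negative crossing of the pair is a cycle for any augmentation, and a disk-matching argument shows that its class under $\df_1^\aug$ is always tied to that of a fixed neighboring crossing, so it is never a boundary.)

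The case in which $b_i$ and $b_{i+1}$ are both positive is the main obstacle, for two reasons: the negative-band constraints of Lemmas~\ref{lem: Ut}--\ref{lem: Mi} give no control over the sizes of $b_i$ and $b_{i+1}$, and there need not be any negative crossing near $b_i\cup b_{i+1}$ to feed into Lemma~\ref{lem: fundamental class}. Here I would again invoke Proposition~\ref{prop:ruling-obstr}, but now tracking the global configuration of the two ruling disks rather than a purely local count: a normal ruling of a $4$-plat assigns the two disks a definite disjoint-versus-nested relation in each vertical slice, this relation changes only across a switch, and the normality condition severely restricts which crossings of a positive center band (respectively, a positive side band) can be switched. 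The plan is to show that the disk configuration a positive center band allows in the slice to its right is incompatible with the one the adjacent positive side band requires on its left, so that a normal ruling with only two disks has no room to interpolate. If that bookkeeping proves too delicate, the alternative is Proposition~\ref{prop:classical-obstr}(2): one tries to show via a $tb$-count --- analyzing parity/knottedness (such fronts tend to be two-component links, hence not $4$-plat knots) or exhibiting a Legendrian destabilization --- that a $4$-plat front with two consecutive positive bands is never $tb$-maximal. I expect pinning down this sign-uniform ruling bookkeeping to be where the bulk of the work lies, and it is presumably the same analysis that does the heavy lifting in the proof of Lemma~\ref{lem: Mi}.

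Having derived a contradiction in all cases, the bands of a fillable $\leg$ strictly alternate in sign. Together with Lemmas~\ref{lem: Ut}--\ref{lem: Mi} (and, once negative split quadruples are also excluded, the remaining constraint), this isolates exactly the family of $4$-plats appearing in Theorem~\ref{thm:main}, whose fillability is then established by the explicit construction in Section~\ref{sec:So}.
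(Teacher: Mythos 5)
Your proposal is a plan rather than a proof, and at its two crucial points it guesses at machinery that either is not needed or would not work. For the case of two adjacent \emph{positive} bands, which you single out as ``the main obstacle'' and propose to attack with a global analysis of nested/disjoint ruling disks or a $tb$-maximality argument, no fillability input is needed at all: two adjacent positive bands are impossible in \emph{any} oriented Legendrian $4$-plat, because the crossing signs force three of the four spanning arcs to be oriented in the same direction, contradicting the fact that a $4$-plat has exactly two arcs of each orientation (this is Proposition~\ref{lem: Fred}). Your sketched ruling bookkeeping is never carried out, and the fallback claim that such fronts ``tend to be two-component links'' or fail $tb$-maximality is unsubstantiated; as written this case is a gap, and the intended heavy argument is aimed at a statement that has an elementary orientation-count proof.

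For the case of two adjacent \emph{negative} bands, you hope for a \emph{local} obstruction (a third ruling disk forced near $b_i\cup b_{i+1}$, with Lemma~\ref{lem: fundamental class} as a fallback), but you exhibit neither the case analysis nor the disk counts, and it is doubtful such a local obstruction exists: the paper's reason this configuration cannot occur in a fillable \emph{knot} is global. Once Lemmas~\ref{lem: Ut}, \ref{lem: Re}, and \ref{lem: Mi} force every negative band to have an even number of crossings, Proposition~\ref{lem: George} shows that an even negative band preserves the sign of its neighbors, so one adjacent negative pair propagates to \emph{all} bands being negative; the orientation-word argument then forces both external bands to have two crossings as well, and an all-negative, all-double $4$-plat is a two-component link, not a knot. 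In other words, the contradiction is with the object being a knot, obtained by propagating signs across the whole front via Propositions~\ref{lem: Fred}, \ref{lem: George}, and \ref{prop: alternating} --- not by a local ruling or LCH computation near the two offending bands. Without these combinatorial propositions (or a worked-out substitute), your argument does not close in either case.
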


The proof will appear at the end of the section.

The following two combinatorial lemmas give us insight into the global structure of $4$-plat knots and are key ingredients in the proof of Lemma~\ref{lem: Mi}.

\begin{prop}
If $b_i$ is a positive band of a Legendrian $4$-plat, then $b_{i+1}$ is a negative band.
\label{lem: Fred}
\end{prop}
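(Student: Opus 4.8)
The plan is to prove the proposition by tracking orientations across vertical slices of the front. Fix an orientation of $\leg$, and at each vertical slice away from crossings and cusps record, for each of the four strands listed from top to bottom as positions $1,2,3,4$, whether it points rightward or leftward; write $\epsilon_j\in\{+1,-1\}$ for this sign at position $j$, and call $(\epsilon_1,\epsilon_2,\epsilon_3,\epsilon_4)$ the \emph{orientation profile} of the slice. Recall that a center band consists of crossings between the strands in positions $2,3$, while a side band consists of crossings between positions $1,2$ (its upper sub-band) and between positions $3,4$ (its lower sub-band).

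\emph{Step 1 (a global constraint on profiles).} At every generic slice, exactly two strands point rightward and two point leftward; consequently the strands in positions $1,2$ are co-oriented if and only if those in positions $3,4$ are co-oriented. Indeed, the two strands emanating from a left cusp are oppositely oriented, since in traversing $\leg$ one of them runs toward the cusp point and the other away from it; hence just to the right of the two left cusps the profile has two $+1$'s and two $-1$'s. Because the front is in plat form, the only features between the left and right cusps are crossings, and each crossing transposes the two (adjacent) entries of the profile corresponding to the two strands it involves (orientations are carried along strands). Transpositions preserve the number of $+1$'s, so the two/two balance persists at every generic slice.

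\emph{Step 2 (band sign versus orientation).} By the sign convention for crossings in a front diagram, a band is positive if and only if its two strands are co-oriented, and negative if and only if they are oppositely oriented; this sign is the same at every crossing of the band. In particular, for a split side band the upper and lower sub-bands receive the same sign, by the equivalence in Step 1 --- this is the content of ``all crossings in a band have the same sign'' for side bands.

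\emph{Step 3 (conclusion).} Let $\sigma$ be the generic slice lying between $b_i$ and $b_{i+1}$, with profile $(\epsilon_1,\epsilon_2,\epsilon_3,\epsilon_4)$, and suppose $b_i$ is positive. If $b_i$ is a center band, then its two strands occupy positions $2,3$ at $\sigma$ and are co-oriented, so $\epsilon_2=\epsilon_3$; the two/two balance then forces $\epsilon_1=\epsilon_4\neq\epsilon_2$, so positions $1,2$ and positions $3,4$ are each oppositely oriented, whence the side band $b_{i+1}$ is negative. If $b_i$ is a side band, then its crossings lie between co-oriented strands, so $\epsilon_1=\epsilon_2$ at $\sigma$, and then $\epsilon_3=\epsilon_4$ by Step 1, whence $\epsilon_2\neq\epsilon_3$, so the center band $b_{i+1}$ is negative. (If $i=n$ there is no $b_{i+1}$ and the statement is vacuous.) I expect the only delicate point --- and the main potential pitfall --- to be pinning down the crossing-sign convention so that ``positive band $\Leftrightarrow$ co-oriented strands'' in Step 2; granting that, the remainder is routine bookkeeping with the orientation profile.
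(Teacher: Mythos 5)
Your proof is correct and rests on the same two facts as the paper's: at any generic vertical slice a $4$-plat has exactly two strands oriented in each direction (equivalently, two spanning arcs each way), and a front crossing is positive exactly when its two strands are co-oriented in the $x$-direction. The paper phrases this as a contradiction read off a local picture (two adjacent positive bands would force three arcs oriented the same way), while you run the same orientation count directly via profiles, much as the paper does in its subsequent proposition on sign alternation; the substance is the same.
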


\begin{figure}
\centerline{\includegraphics{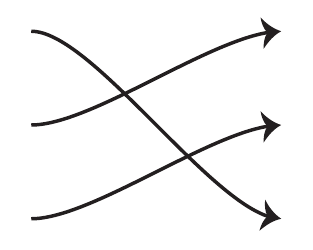}}
\caption{Crossings from two adjacent positive bands. }
\label{fig:double positive}
\end{figure}

\begin{proof}
Suppose not, i.e.\ that a Legendrian $4$-plat has two adjacent positive bands. Such a knot will look locally like the diagram in Figure~\ref{fig:double positive}, up to reflection through the horizontal or the vertical.  Notice that this diagram necessarily has three spanning arcs oriented in the same direction, which contradicts the fact that a $4$-plat knot necessarily has exactly two spanning arcs oriented in each direction.
\end{proof}

\begin{prop}
If $b_i$ is an internal negative band of a Legendrian $4$-plat, then $b_{i+1}$ has the same sign as $b_{i-1}$ if and only if $b_i$ has an even number of crossings.
\label{lem: George}
\end{prop}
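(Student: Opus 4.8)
The plan is to read the sign of each band off the orientations of the two strands it involves. Using the crossing-sign conventions for front diagrams, together with the already-noted fact that all crossings of a single band share a sign, a band is negative exactly when its two strands run in opposite directions there and positive when they run in the same direction. So I would label the four strand positions $1,2,3,4$ from top to bottom and attach to each region of the front between two consecutive bands (and to the regions before $b_1$ and after $b_n$) its \emph{orientation state}: the pattern of leftward/rightward orientations at positions $1,2,3,4$. A first observation is that exactly two strands point rightward across any generic vertical slice --- the part of $\leg$ to the left of the slice is a compact oriented $1$--manifold whose boundary lies on the slice, and the signed count of boundary points is zero --- so there are only six states, each classified by two bits: whether positions $2,3$ are coherently oriented (which controls the sign of a \emph{center} band placed there) and whether positions $1,2$, equivalently $3,4$, are coherently oriented (which controls the sign of a \emph{side} band).

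Two structural facts then drive the argument. First, a band never alters whether its \emph{own} two strands are coherently oriented (it at most transposes those two strands, and transposing equally-oriented strands changes nothing); consequently the sign of $b_{i-1}$ is already determined by the state $S$ in the region between $b_{i-1}$ and $b_i$, and the sign of $b_{i+1}$ by the state $S'$ in the region between $b_i$ and $b_{i+1}$. Second, crossing the band $b_i$ transforms $S$ into $S'$ by transposing positions $2,3$ iff $b_i$ is a center band with an odd number of crossings, and by transposing positions $1,2$ (resp.\ $3,4$) according to the parity of $b_{iu}$ (resp.\ $b_{il}$) if $b_i$ is a side band. In particular $b_i$ acts trivially on its own pair of strands exactly when it is even.

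With this set up, the hypothesis that $b_i$ is \emph{negative} forces its two strands to be oppositely oriented in $S$, which cuts $S$ down to the four states consistent with that. The proof splits into the case that $b_i$ is a center band (so $b_{i-1},b_{i+1}$ are side bands) and the case that $b_i$ is a side band (so $b_{i-1},b_{i+1}$ are center bands); in the second case one further distinguishes which of $b_{iu},b_{il}$ is odd, and the top-to-bottom reflection of the front (swapping positions $1\leftrightarrow 4$ and $2\leftrightarrow 3$) reduces this to a single subcase. In each instance I would check directly, over the at most four admissible states, that when $b_i$ is even the transition $S\mapsto S'$ leaves unchanged the two coherence bits governing the signs of $b_{i-1}$ and $b_{i+1}$ (so those signs agree), while when $b_i$ is odd it toggles exactly one of them (so the signs disagree). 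That is precisely the claimed equivalence.

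I expect the main obstacle to be the bookkeeping: correctly matching up, in each (sub)case, which coherence bit of the state governs the sign of $b_{i-1}$ versus that of $b_{i+1}$, and confirming that the negativity of $b_i$ really restricts $S$ to the four-element set on which the even/odd dichotomy is clean rather than to some other four states where it would fail. The only genuinely external ingredient is the convention identifying a band's sign with the relative orientation of its strands --- once that is fixed, everything else is a finite verification. It is worth noting that this convention is in fact forced: under the opposite convention a negative internal center band would sit in a state that its crossings cannot change, so the signs of $b_{i-1}$ and $b_{i+1}$ would be independent of the parity of $b_i$, contradicting the statement.
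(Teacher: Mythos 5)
Your proposal is correct and takes essentially the same route as the paper: both read the sign of a band off the relative orientations of its two strands and track how crossing $b_i$ permutes the orientation pattern of the four spanning arcs according to parity, reducing the statement to a finite check (your two ``coherence bits'' are exactly a repackaging of the paper's three orientation-word classes $O_1$, $O_2$, $O_3$). One immaterial imprecision: an even split band with both sub-bands odd does not act trivially on the orientation state (it reverses all four letters when $b_i$ is negative), but, as your own finite verification is framed, it still preserves both coherence bits, which is all that is needed.
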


\begin{proof}
To analyze changes in sign as we pass from one side of a negative band to the other, we think of the band as permuting the order of orientations of spanning arcs. We denote the orientations of the spanning arcs at a fixed $x$ coordinate using a word in the letters $L$, for an arc oriented to the left, and $R$, for a spanning arc oriented to the right.  The letters in the word record the orientations from top to bottom.  Such a word necessarily has two of each letter.  Denote by $W_l$ (resp. $W_r$) the orientation word for the strands to the left (resp. right) of the band $b_i$.

It is easy to determine the sign of a crossing between two adjacent spanning arcs:  it is positive if the letters representing orientations agree, and negative otherwise. There are three classes of orientation words:
\begin{enumerate}
\item $O_1 = \{LLRR, RRLL\}$
\item $O_2 = \{LRRL, RLLR\}$
\item $O_3 = \{LRLR, RLRL\}$
\end{enumerate}
Each of the words in a given class of orderings yields the same signs for corresponding crossings.  Further, for all of the classes, all crossings in a given split side band have the same sign.

We now use the notation built up above to prove the proposition. If the band $b_{i}$ has an even number of crossings, then either the order of the orientations to the right of $b_{i}$ is the same as the order to the left or --- if $b_{i}$ is a split band with an odd number of crossings in both sub-bands --- all orientations are reversed.  Thus, the words $W_l$ and $W_r$ belong to the same class of orderings and hence the signs of $b_{i-1}$ and $b_{i+1}$ must be the same.

Now suppose that $b_i$ has an odd number of crossings. If $b_i$ is a center band, then $W_l$ must belong to $O_1$ or $O_3$.  Since moving across $b_i$ transposes the orientations of the two center arcs, we see that $W_r$ belongs to $O_3$ or $O_1$, respectively.  By inspection of the words in $O_1$ and $O_3$, the signs of the side bands $b_{i-1}$ and $b_{i+1}$ are opposite.

If $b_i$ is a side band, then $W_l$ must belong to $O_2$ or $O_3$. Since moving across $b_i$ transposes the orientations of two of the side arcs, we see that $W_r$ belongs to $O_3$ or $O_2$, respectively.  By inspection of the words in $O_2$ and $O_3$, the signs of the center bands $b_{i-1}$ and $b_{i+1}$ are opposite.
\end{proof}

These propositions will allow us to make use of the following local obstruction to fillability.

\begin{prop}
If $\leg$ contains negative bands $b_{i-1}$ and $b_i$ such that $b_i$ is a split triple, then $\leg$ is not fillable.
\label{lem: config}
\end{prop}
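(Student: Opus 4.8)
The plan is to establish the contrapositive of the ruling obstruction: I will show that a $4$-plat front $\leg$ containing adjacent negative bands $b_{i-1}$ (necessarily a center band) and $b_i$ (a split triple) admits no normal ruling, so that $\leg$ is not fillable by Proposition~\ref{prop:ruling-obstr}. The argument refines the one used for Lemma~\ref{lem: Ut}, and its engine is the fact that a $4$-plat front carries exactly two ruling disks, so each of the four strands lies in one of two disks, two strands per disk; any configuration forcing a strand to meet a strand of each disk transversely is then impossible.

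I would carry this out as follows. First, fix the local picture in the window of the front occupied by $b_{i-1}$ and $b_i$: after possibly reflecting the front in a horizontal line, we may assume the two-crossing sub-band of the split triple $b_i$ runs between the top two strands, its single crossing runs between the bottom two strands, and the (negative) crossings of $b_{i-1}$ run between the middle two strands, to the left of $b_i$. When $b_{i-1}$ is the first band or $b_i$ the last side band, the neighbouring cusps play the role of ``the rest of the front'' and only simplify matters. Second, record the two local facts used throughout: at a non-switched crossing the two strands meeting there lie in distinct ruling disks, while at a switched crossing the normality configurations of Figure~\ref{fig:normality} determine the relative heights, hence the disk, of the strand immediately above or below the switch. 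Third, run a finite case analysis on which of the crossings of $b_i$ and $b_{i-1}$ are switched. In each case, propagating the disk assignments forced by the two local facts across the at least four crossings lying among three distinct strand-pairs leaves one strand transversely adjacent to a strand of each disk, exactly as in the contradiction reached in Lemma~\ref{lem: Ut}. Hence no normal ruling exists.

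I expect the main obstacle to be the case analysis itself, together with verifying that it genuinely exploits the third crossing of the split triple: a single crossing in a sub-band is by itself harmless, so the argument must use all three crossings of $b_i$ in concert with the orientation and sign data governing which switch patterns are admissible (in the language of Proposition~\ref{lem: George}), rather than treating the single crossing in isolation as in Lemma~\ref{lem: Ut}. Should some sub-case resist the ruling analysis, a parallel route is available through Lemma~\ref{lem: fundamental class}: for an arbitrary augmentation $\aug$, the rightmost crossing $p$ of the two-crossing sub-band of $b_i$ is a cycle --- no admissible disk can originate at $p$, since the region immediately to its left is the inadmissible bigon that $p$ forms with the other crossing of that sub-band --- and a bigon-sliding correspondence extending the one used for Lemma~\ref{lem: Re}, now moving the negative corner past that bigon into the crossings of $b_{i-1}$, shows that $p$ occurs in $\df^\aug_1$ only together with a fixed partner crossing and is therefore never a boundary.
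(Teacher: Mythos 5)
Your primary route cannot work, because the hypothesis of the proposition does not rule out normal rulings, so Proposition~\ref{prop:ruling-obstr} is too weak an obstruction here. Concretely, consider the $4$-plat knot $[2,(2,1),1]$. It has a single component, and with either orientation the orientation word just to the right of the left cusps is $RLRL$, so the center band $b_1$ and both sub-bands of the split triple $b_2$ consist of negative crossings; thus it satisfies the hypothesis with $i=2$. Labelling the crossings $A_1,A_2$ (band $b_1$), $U_1,U_2$ (upper sub-band), $V$ (lower sub-band), and $B$ (band $b_3$), the assignment ``switch at $A_1$ and at $U_1$, pass at every other crossing'' is a normal ruling: at each crossing the two strands lie in different ruling disks, the switch at $A_1$ is of the disjoint type and the switch at $U_1$ of the nested type allowed by Figure~\ref{fig:normality}, and the two paths of each disk run monotonically from a left cusp to a right cusp without meeting away from the cusps. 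So the case analysis you propose does not close up: unlike the internal single band of Lemma~\ref{lem: Ut}, a negative band followed by a negative split triple is compatible with a normal ruling (and hence with an augmentation), which is precisely why the paper proves this proposition with the linearized-homology obstruction of Lemma~\ref{lem: fundamental class} rather than with rulings.

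Your fallback is the right kind of argument, but its key step is asserted rather than proved, and it is not what the paper does. The paper's proof takes $a$ to be the last crossing of the center band $b_{i-1}$ (Figure~\ref{fig:split triple}) and observes that every augmented disk with a negative corner at $a$ must originate at one of the two adjacent split-triple crossings $b$ or $c$; since those have odd degree, no degree-zero generator has $a$ in its linearized differential, so $a$ is a cycle that is never a boundary. Your proposed ``bigon-sliding correspondence'' for the crossing $p$ of the two-crossing sub-band is a genuinely different claim: the sliding in Lemma~\ref{lem: Re} moves a corner along a single band, i.e.\ along the same pair of strands, whereas you need to transport a corner from a side sub-band to the center band $b_{i-1}$, and you give no argument that augmented disks with a corner at $p$ are matched bijectively with disks having a corner at one fixed partner crossing. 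As written this is the whole content of the non-boundary claim, so the gap is essential; to repair the Legendrian-contact-homology route, analyze where augmented disks with a negative corner at the last crossing of $b_{i-1}$ can originate, as in the paper.
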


\begin{proof}
Locally at bands $b_{i-1}$ and $b_i$, $\leg$ looks like the diagram in Figure \ref{fig:split triple} or its reflection through the horizontal. Observe that there is no augmented disk in $\leg$, originating at a positive crossing, with a negative corner at the crossing marked $a$, as any augmented disk with a negative corner at $a$ must originate at $b$ or $c$. Thus $a$ is a negative crossing that is a cycle but not a boundary, so $\leg$ cannot be fillable by Lemma~\ref{lem: fundamental class}.
\end{proof}

\begin{figure}
\labellist
	\small
	\pinlabel $a$ [t] at 55 48
	\pinlabel $b$ [b] at 60 72
	\pinlabel $c$ [t] at 80 25
\endlabellist

\centerline{\includegraphics{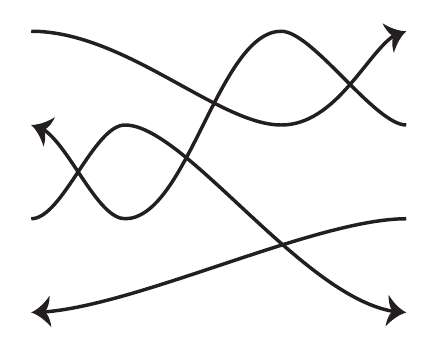}}
\caption{Negative split triple preceded by negative band.}
\label{fig:split triple}
\end{figure}

To prove Lemma~\ref{lem: Mi}, it remains to show that the presence of a split triple band necessarily induces the configuration described in Proposition~\ref{lem: config}. We will use the following useful observation that is a direct consequence of Propositions \ref{lem: Fred} and \ref{lem: George}.

\begin{prop}
\label{prop: alternating}
If $\leg$ contains a sequence of bands $b_i,\dots,b_k$ such that $b_i$ is positive and none of the bands $b_i,\dots,b_{k-1}$ are odd negative bands, then $b_i,\dots, b_k$ strictly alternate in sign.
\end{prop}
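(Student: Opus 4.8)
The plan is to prove the statement by induction on the index $m$, for $i\le m\le k$, establishing the sharper claim that $b_i,\dots,b_m$ strictly alternate in sign, with $b_m$ positive exactly when $m-i$ is even and negative exactly when $m-i$ is odd. The base case $m=i$ is immediate: $b_i$ is positive by hypothesis, and a single band trivially ``alternates.''

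For the inductive step I would assume the claim for some $m$ with $i\le m<k$ and split into two cases according to the parity of $m-i$. If $m-i$ is even, then $b_m$ is positive, and Proposition~\ref{lem: Fred} directly yields that $b_{m+1}$ is negative, as required (note $b_{m+1}$ exists since $m\le k-1\le n-1$). If $m-i$ is odd, then $b_m$ is negative; since $b_i$ is positive we have $m\ge i+1$, so $b_{m-1}$ exists and is positive by the inductive hypothesis. To apply Proposition~\ref{lem: George} I first observe that $b_m$ is internal, since $1<i+1\le m\le k-1\le n-1<n$. Because $m\le k-1$, the band $b_m$ lies among $b_i,\dots,b_{k-1}$, so by hypothesis it is not an odd negative band; being negative, it must therefore have an even number of crossings. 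Proposition~\ref{lem: George} then gives that $b_{m+1}$ has the same sign as $b_{m-1}$, i.e.\ positive, which is again opposite to the sign of $b_m$. This completes the induction, and taking $m=k$ yields the proposition.

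Since everything reduces to the two cited propositions, there is no serious obstacle here; the only step that needs a moment's care is confirming that $b_m$ is internal whenever Proposition~\ref{lem: George} is invoked, which is exactly the content of the chain of inequalities $1<m<k\le n$ together with the fact that $m>i$ in the negative case.
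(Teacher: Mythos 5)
Your proposal is correct and is essentially the paper's argument: the paper states this proposition without proof as a ``direct consequence'' of Propositions~\ref{lem: Fred} and \ref{lem: George}, and your induction simply spells out that consequence, including the (minor but worthwhile) check that $b_m$ is internal whenever Proposition~\ref{lem: George} is invoked.
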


\begin{proof}[Proof of Lemma~\ref{lem: Mi}]
Let $\leg$ be a Legendrian $4$-plat knot, without internal single bands or long negative bands, given by the tuple $[b_1, b_2,\dots,  b_n]$. Suppose that $\leg$ contains a negative split triple and that $b_k$ is the leftmost negative split triple.  This setup implies that all negative bands to the left of $b_k$ have an even number of crossings.  Further, we know that $k$ must be even, or else $b_k$ would be a center band. There are three cases to consider, based on the signs of $b_1$ and $b_2$.

\begin{description}
\item[Case 1] Suppose that $b_1$ is a negative band and $b_2$ is a positive band. Then $b_2,\dots,b_k$ satisfy the hypotheses of Proposition~\ref{prop: alternating}, so this sequence of bands is strictly alternating in sign. This means that $k$ is odd, which contradicts the fact noted above that $k$ must be even.

\item[Case 2]
Suppose that $b_1$ and $b_2$ are both negative bands. Proposition~\ref{lem: George} implies that each band $b_i$ with $i<k$ is negative. In particular, $b_{k-1}$ is a negative center band. Thus we have the configuration described in Proposition~\ref{lem: config}, so $\leg$ is not fillable.

\item[Case 3]
Suppose that $b_1$ is a positive band. By Proposition~\ref{prop: alternating}, the first $k$ bands alternate in sign, and in particular $b_{k-1}$ is positive. If  $b_k$ were the only split triple in $\leg$, Proposition~\ref{lem: George} would imply that all bands to the right of $b_k$, and in particular $b_n$, are negative. The facts that $b_1$ is positive and $b_n$ is negative together imply that $r(\leg)\neq 0$, which is an obstruction to fillability. Thus, it must be the case  that there is another split triple $b_{k'}$. Proposition~\ref{lem: George} then implies that all bands $b_k,\dots,b_{k'}$ are negative, so $b_{k'-1}$ is negative. Thus, Proposition~\ref{lem: config} tells us that $\leg$ is not fillable.
\end{description}
\end{proof}

\begin{proof}[Proof of Corollary~\ref{cor: alternating bands}]
	Lemmas~\ref{lem: Ut}, \ref{lem: Re}, and \ref{lem: Mi} imply that every negative band in a fillable Legendrian $4$-plat has an even number of crossings.  Thus, by Proposition~\ref{prop: alternating}, all $4$-plats satisfying the hypotheses of Cases $1$ and $3$ in the proof above must have bands of alternating sign.  
	
A $4$-plat knot satisfying the hypotheses of Case $2$ must have only negative bands and have an even number of crossings in each internal band.  The external bands must both have even numbers of crossings as well.  To see why, note that if the first band is negative and has but one crossing, then the $4$-plat must lie in Case 1.  If the first band has two crossings, then, in the notation of the proof of Lemma~\ref{lem: George}, the orientation words of the spanning arcs will be of class $O_3$.  Thus, the last band must also have two crossings, or else the orientation word just to the left of the right cusps will be in class $O_1$, which is impossible.  Thus, in Case $2$, we see that \emph{all} bands have exactly two crossings. Such a $4$-plat is necessarily a $2$-component link, not a knot, so the Corollary follows.
\end{proof}

\begin{rem} \label{rmk:1-neg-xing}
	The proof of Corollary~\ref{cor: alternating bands} shows that if the first band of a fillable $4$-plat knot is negative, then it must have exactly one crossing; the same is true for the last band.
\end{rem}

\subsection{Eliminating Negative Split Quadruples}
\label{sec:Fa}

We now know that in a fillable $4$-plat knot, bands strictly alternate in sign, internal negative center bands have exactly two crossings, and negative side bands have either two or four crossings.  Having proved Lemmas~\ref{lem: Ut} through \ref{lem: Mi}, we need only to eliminate negative split quadruples.

\begin{lem}
\label{lem: Fa}
A Legendrian $4$-plat knot with a negative split quadruple is not fillable.
\end{lem}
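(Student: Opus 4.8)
The plan is to argue as in Lemmas~\ref{lem: Re} and \ref{lem: Mi}: I will exhibit a negative crossing inside the split quadruple that is a cycle but not a boundary for \emph{every} augmentation, and then invoke Lemma~\ref{lem: fundamental class}.

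First I would fix the local picture. A split quadruple is a split side band, hence an internal band, so by Corollary~\ref{cor: alternating bands} its neighbors $b_{k-1}$ and $b_{k+1}$ are positive center bands. Thus, up to a reflection through the horizontal, the front near a negative split quadruple $b_k$ looks like: the top two strands crossing negatively twice, the bottom two strands crossing negatively twice, the middle two strands passing straight through $b_k$, and a positive crossing of the middle two strands immediately on each side.

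Let $a$ and $a'$ be, respectively, the right and left crossings of the upper sub-band of $b_k$ (both negative). I would first check that $a$ is a cycle: an admissible disk originating at $a$ must run leftward into the positive center band $b_{k-1}$, and the way the middle strands issuing from $b_{k-1}$ interleave with the top pair --- together with the fact that the upper sub-band has exactly two crossings and that its strands re-enter $b_{k-1}$ with no cusp in between --- prevents the boundary of any such disk from closing up at a left cusp. I would then show $a$ is not a boundary for any augmentation $\aug$ by the disk-matching argument of Lemma~\ref{lem: Re}: an augmented disk has at most one negative corner at a negative crossing, so a disk with a corner at $a$ (resp.\ $a'$) has no further corner inside $b_k$, and adding or deleting the bigon between $a$ and $a'$ sets up a bijection between the augmented disks with a corner at $a$ and those with a corner at $a'$. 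Consequently $\df_1^\aug b = c(a+a')+x$ for every generator $b$, with $c \in \zz/2$ and $x$ in the span of the crossings other than $a$ and $a'$; so $a$ is a negative crossing that is a cycle but is never a boundary, and $\leg$ is not fillable by Lemma~\ref{lem: fundamental class}.

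The step I expect to be the main obstacle is verifying that $a$ is a cycle. Unlike a long band --- where the sheer length of the band rules out an originating disk immediately --- a double sub-band does not by itself obstruct a disk originating at its right crossing, so one must genuinely use the surrounding data, especially the two flanking positive center bands and the routing of the middle strands, and check that no disk can thread its way out to a left cusp. Should the upper sub-band prove uncooperative, the same argument run on the lower sub-band, or a direct appeal to the ruling obstruction of Proposition~\ref{prop:ruling-obstr} along the lines of Lemma~\ref{lem: Ut} --- a $4$-plat has only two ruling disks, and comparing the switch/non-switch options for the four crossings of a negative split quadruple with the two possible nesting patterns of those disks should again force a contradiction --- would serve as a fallback.
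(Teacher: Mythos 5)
Your choice of $a$ and $a'$ as the two \emph{adjacent} crossings of the same (upper) sub-band, together with the claimed ``add or delete the bigon'' bijection, is where the argument breaks. If that pairing were valid, nothing in it would use the lower sub-band: you only use that $b_k$ is negative (so augmented disks have no second corner in $b_k$) and that the flanking center bands are positive. The identical argument would therefore show that \emph{any} negative non-split double band flanked by positive center bands makes the knot unfillable --- but such bands occur in fillable $4$-plats (this is exactly the case ``$b_2$ is a non-split double'' in the construction of Section~\ref{sec:So}, bottom of Figure~\ref{fig:Ron}), so by the Seidel isomorphism no such pairing of disks can exist in general. Concretely, the parity of the separation matters: in Lemma~\ref{lem: Re} the paired crossings are the rightmost and \emph{third} rightmost of the band, and the local modification of Figure~\ref{fig:long bands} moves a corner two crossings at a time. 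For adjacent crossings, absorbing the bigon into a disk with a corner at $a$ removes that corner but does not create a corner at $a'$ (the boundary then passes straight through both crossings), so you do not get a disk contributing $a'$; the conclusion $\df_1^\aug b = c(a+a')+x$ for every generator is simply false here. (Your worry about $a$ being a cycle, by contrast, is unfounded: a disk originating at the right crossing of a two-crossing sub-band covers the west quadrant and is trapped in the bigon at $a'$, exactly as in the long-band case. And the ruling fallback is only a sketch; these fronts are not obstructed at the level of normal rulings/augmentations in any evident way --- the paper needs the finer dimension count coming from the Seidel isomorphism, not mere existence of an augmentation.)

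The paper's proof instead exploits the \emph{split} structure: it takes $a$ and $a'$ to be the rightmost crossings of the upper and lower sub-bands of the leftmost negative split quadruple $b_k$, and $b$ the adjacent crossing of the positive center band to its right. After normalizing $b_1$ to be positive via Corollary~\ref{cor: alternating bands}, a global analysis of the front to the left of $b_k$ (two cases, according to whether a negative split double occurs to the left, Figure~\ref{fig:Lupin}) shows that every augmented disk with a corner at $a$ or at $a'$ must originate at the single crossing $b$. Hence $a$ and $a'$ span a two-dimensional space of cycles, while the image of $\df_1^\aug$ meets that span in dimension at most one, so one of $a,a'$ is a non-bounding odd-degree cycle and Lemma~\ref{lem: fundamental class} applies. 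If you want to salvage your approach, you must replace the local bigon pairing with this kind of global control over where disks with corners at $a$ and $a'$ can originate.
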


\begin{proof}
Suppose not, i.e.\ that there is a fillable Legendrian $4$-plat knot with a negative split quadruple.  By Corollary~\ref{cor: alternating bands}, we may assume that $b_1$ is a positive band. Let $b_k$ denote the leftmost negative split quadruple. 

\begin{figure}
\labellist
	\pinlabel $+$ [b] at 181 53
	\pinlabel $+$ [b] at 288 53
	
	\pinlabel $+$ [b] at 181 190
	\pinlabel $+$ [b] at 288 190
	\pinlabel $+$ [b] at 72 190
	
	\pinlabel $-$ [b] at 126 217
	\pinlabel $-$ [b] at 126 163

	\pinlabel $a$ [b] at 373 90
	\pinlabel $a'$ [t] at 374 33
	\pinlabel $b$ [b] at 391 65
	
	\pinlabel $a$ [b] at 373 225
	\pinlabel $a'$ [t] at 374 168
	\pinlabel $b$ [b] at 391 202
\endlabellist

\centerline{\includegraphics[width=5in]{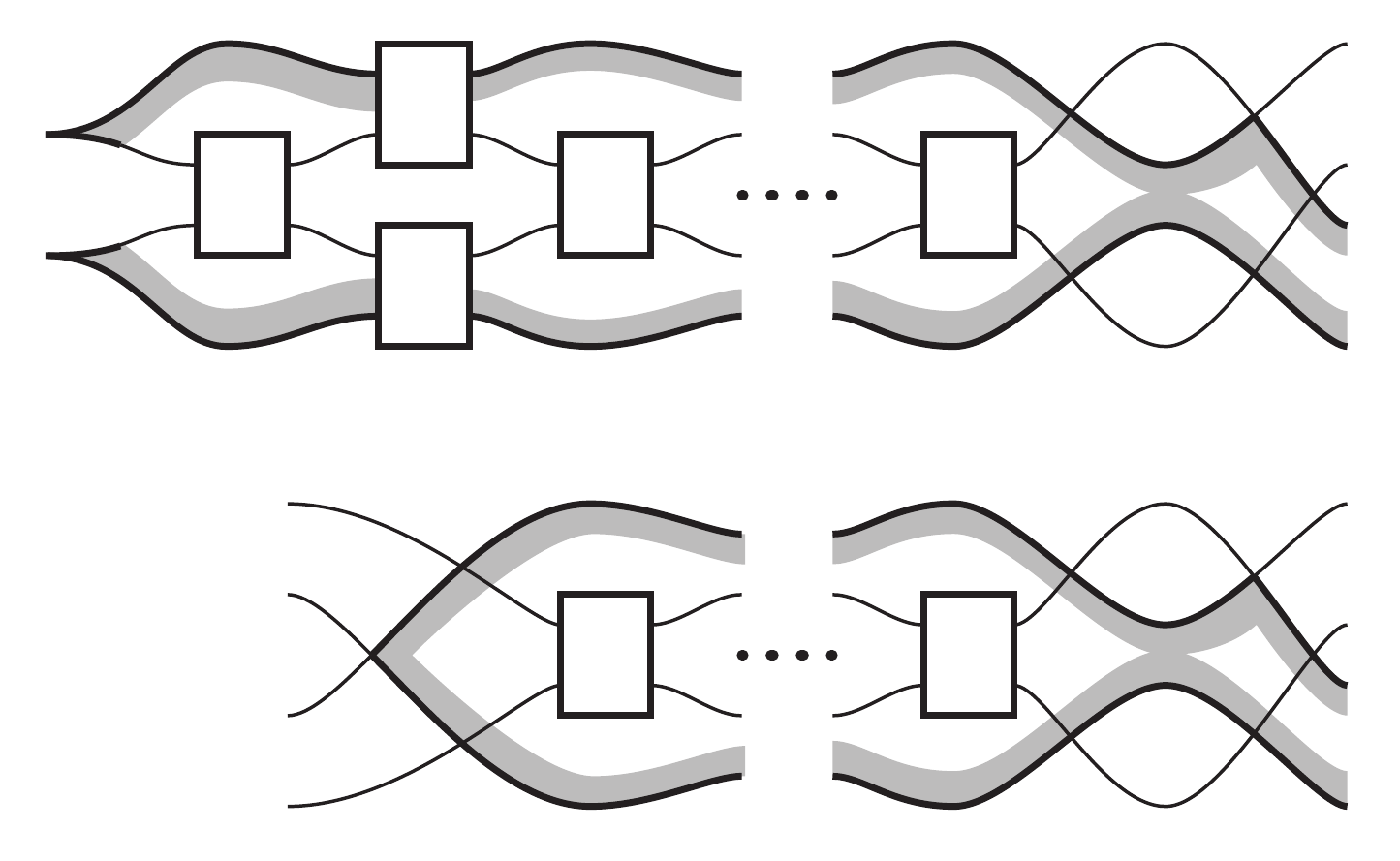}}
\caption{Attempt at constructing an augmented disk with a negative corner at $a$ for link without (top) and with (bottom) a negative split double to the left of the leftmost negative split quadruple band.}
\label{fig:Lupin}
\end{figure}

Consider first the case where there is no split double band to the left of $b_k$. In this case, the knot is of the form shown at the top of Figure~\ref{fig:Lupin}, with crossings $a$, $a'$, and $b$ as labeled. We attempt to construct an augmented disk, not originating at $b$, with $a$ as a negative corner, as pictured in Figure~\ref{fig:Lupin}. There is no valid augmented disk encompassing the union of these paths, since an admissible disk may not meet multiple left cusps. Thus any disk with $a$ as its only negative corner must originate at $b$. A symmetric argument gives the same result for the crossing $a'$.

Now consider the case where there \textit{is} a split double band to the left of $b_k$. In this case, the knot is of the form shown at the bottom of Figure~\ref{fig:Lupin} (the left-most split band in the figure is the split double). We attempt to construct an augmented disk, not originating at $b$, with $a$ as a negative corner, as pictured in Figure~\ref{fig:Lupin}. There is no valid augmented disk encompassing the union of these paths since the top and bottom strand of a disk may not intersect as they do to the left of the split double. Thus, as in the first case, an augmented disk with $a$ as a negative corner must originate at $b$, and a symmetric argument gives the same result for $a'$.

Since $a$ and $a'$ are each cycles, the subspace of $\ker \df^\aug_1$ generated by $a$ and $a'$ has dimension $2$. Since any augmented disk with a corner at $a$ or $a'$ originates at $b$, the subspace of $\img \df^\aug_1$ generated by $a$ and $a'$ has dimension at most $1$. It must be the case, then, that either $a$ or $a'$ is an odd degree cycle that is not a boundary. Thus, by Lemma~\ref{lem: fundamental class}, the knot cannot be fillable, a contradiction to the original hypothesis.
\end{proof}

Having shown that a fillable knot cannot contain a negative split quadruple, we have completed our proof that a fillable Legendrian $4$-plat knot in plat form is of the form described in the main theorem.

\subsection{Constructing Fillings}
\label{sec:So}

The final step in the proof of the main theorem is to show that a knot of the form described in the main theorem --- one in which negative bands have at most two crossings and internal negative bands have exactly two crossings --- is indeed fillable. In fact, our construction also works for links of the same form. 

The construction proceeds by induction on the number of center bands.  The key fact that drives the induction is encapsulated by the following lemma:

\begin{lem} \label{lem:good-pinch}
	If a $4$-plat link with at least three bands satisfies the hypotheses of the theorem, then the $4$-plat link formed by pinching off the first two bands and the first crossing of the third band (as in Figure~\ref{fig:pinch}) also satisfies the hypotheses of the theorem.
\end{lem}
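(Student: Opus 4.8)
The lemma is purely combinatorial, so the plan is to read off from Figure~\ref{fig:pinch} the tuple representing the pinched link and then verify the two conditions of Theorem~\ref{thm:main} directly. Write the original link $\leg$ as $[b_1,b_2,\dots,b_n]$ with $n\ge 3$. From Figure~\ref{fig:pinch}, pinching off $b_1$, $b_2$, and the first crossing of $b_3$ produces the link represented by $[b_3-1,\,b_4,\,b_5,\dots,b_n]$: the center band $b_3$ loses one crossing and becomes the new (external) first band, while each later band $b_i$ simply shifts to index $i-2$ with its sign and number of crossings unchanged. Confirming this description from the orientation conventions in the figure --- in particular that the pinch flips no signs --- is the one picture-dependent step.

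Granting that description, most of the verification is automatic. For $i\ge 4$, the band $b_i$ becomes band $b_{i-2}$ of the new link with the same sign and the same number of crossings, and it is internal in the new link precisely when $b_i$ was internal in $\leg$ (since $1<i-2<n-2$ is equivalent to $1<i<n$ once $i\ge 4$); so both conditions of Theorem~\ref{thm:main} are inherited verbatim for these bands. The original external bands $b_1$ and $b_n$ are sent, respectively, to a band pinched away and to the new external last band, so no band ever passes from external to internal. Hence the only band needing real attention is the new first band, with $b_3-1$ crossings; being external, it need only be checked that it has at most two crossings when it is negative.

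To pin down $b_3$ we use the alternating structure. The hypotheses of Theorem~\ref{thm:main} force every internal negative band to have exactly two---hence an even number of---crossings, so Propositions~\ref{lem: Fred} and~\ref{lem: George} apply exactly as in the proof of Corollary~\ref{cor: alternating bands}: $[b_1,\dots,b_n]$ either has strictly alternating signs or is the all-negative $4$-plat in which every band has exactly two crossings. In the alternating case $b_3$ has the same sign as $b_1$; if that sign is positive there is nothing to check, and if it is negative then (using Remark~\ref{rmk:1-neg-xing} and $n\ge 3$) $b_3$ is an internal negative band, so $b_3=2$ and the new first band has $b_3-1=1\le 2$ crossings. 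In the all-negative case $b_3=2$ as well, so again the new first band has a single crossing. This settles every case.

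What remains is bookkeeping of degenerate configurations, which is also where the subtleties live. Since an internal band has at least two crossings, $b_3=1$ forces $n=3$ (so that $b_3$ is itself external in $\leg$); in that situation the pinch removes all of $\leg$ and the pinched link is empty, satisfying the hypotheses vacuously. When $b_3\ge 2$, the tuple $[b_3-1,b_4,\dots,b_n]$ is a genuine canonical tuple, its first band $b_3-1>0$ still being a center band. One should also keep track of the cases $b_3=2$ (the new first band is then a single band, which is permissible because it is external) and $b_1$ a negative single band; both are absorbed by the sign analysis above. The hard part, accordingly, is not any inequality but stating the pinched tuple correctly from Figure~\ref{fig:pinch} and arranging the index shift so that these edge cases are manifestly covered.
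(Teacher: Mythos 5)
Your proposal matches the paper's treatment: the paper itself declares the lemma obvious apart from a case-by-case check that the pinch preserves orientations (hence crossing signs) on the right-hand portion, and you carry out the same tuple/index bookkeeping while deferring exactly that orientation check to Figure~\ref{fig:pinch}. One small note: your detour through Corollary~\ref{cor: alternating bands} and Remark~\ref{rmk:1-neg-xing} is both unnecessary and not quite licensed (those statements are about fillable knots, not links merely satisfying the combinatorial hypotheses, and the claim that $b_3$ is internal fails when $n=3$); it can be skipped entirely, since if the new first band is negative then $b_3$ was a negative band, so $b_3\le 2$ by hypothesis and the new band has $b_3-1\le 1$ crossings.
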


\begin{figure}
\labellist
	\small
	\pinlabel $b_1$ [b] at 60 52
	\pinlabel $b_{2u}$ [b] at 113 79
	\pinlabel $b_{2l}$ [b] at 113 24
	\pinlabel $b_3-1$ [b] at 198 52
	\pinlabel $b_3-1$ [b] at 387 52
\endlabellist

\centerline{\includegraphics[width=4.5in]{pinch}}
\caption{Pinching off the first two bands and the first crossing of the third band to form a new $4$-plat link.}
\label{fig:pinch}
\end{figure}

The proof of the lemma is obvious, though some care must be taken with orientations on a case-by-case basis to guarantee that the orientations on the right side of the original $4$-plat agree with those on the new, pinched $4$-plat.

We now begin the proof proper.  Suppose that we have a $4$-plat link $\leg$ satisfying the hypotheses of the main theorem defined by the tuple $B=[b_1, b_2=(b_{2u}, b_{2l}), \dots, b_{2n}=(b_{(2n)u}, b_{(2n)l}), b_{2n+1}]$. The construction proceeds by induction on $n$ in two separate cases, depending on whether the leftmost band is positive or negative.

Let us begin with the case when the leftmost band of $\leg$ is positive.  For this case, we refer to Figure~\ref{fig:Ron}. The top diagram in the figure represents the case where $b_2$ is a (negative) split double and the bottom diagram represents the case where $b_2$ is a (negative) non-split double.

\begin{figure}
\labellist
	\small
	\pinlabel $b_1-1$ [b] at 72 49
	\pinlabel $b_1-1$ [b] at 72 183
	
	\pinlabel $b_1-1$ [b] at 235 49
	\pinlabel $b_1-1$ [b] at 235 183
	
	\pinlabel $b_3-1$ [b] at 415 49
	\pinlabel $b_3-1$ [b] at 415 183
\endlabellist

\centerline{\includegraphics[width=5in]{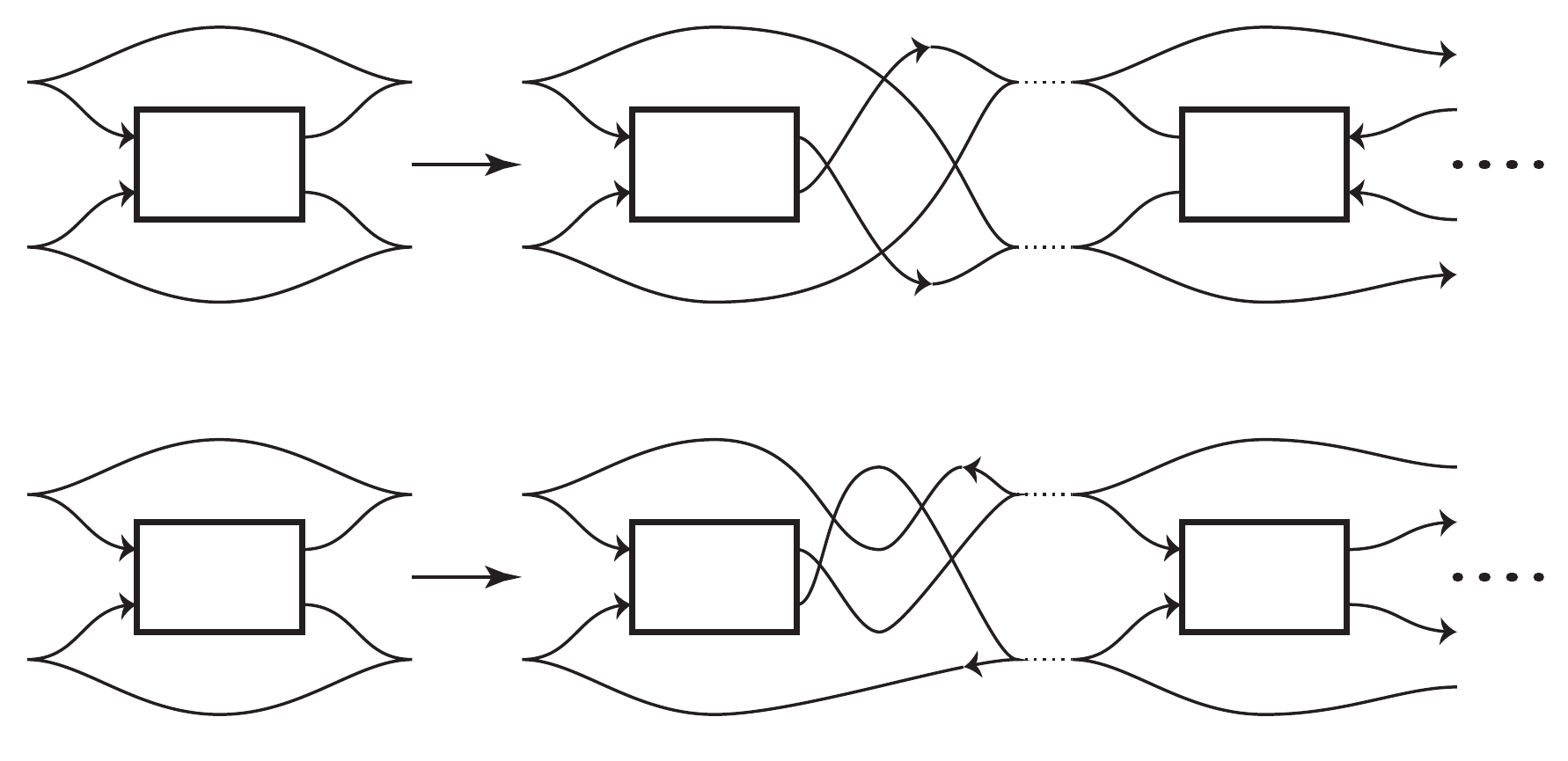}}
\caption{Inductive step for constructing a filling in the case where the initial band is positive. The top diagram represents the case where $b_2$ is a split double. The bottom diagram represents the case where $b_2$ is a non-split double.}
\label{fig:Ron}
\end{figure}

To see that $\leg$ is fillable, we follow the procedure of Figure~\ref{fig:Ron}.  On one hand, the links $\leg_0$ on the left side of Figure~\ref{fig:Ron} are positive, and hence fillable by \cite{positivity}. By an isotopy as in Figure~\ref{fig:Ron}, and using the fact that $b_2=2$ since it is an internal negative band, we see that $\leg_0$ may be defined by the tuple $B_0 = [b_1,b_2,1]$. On the other hand, the inductive hypothesis shows that the link $\leg'$ defined by the tuple $B' = [b_3-1, \ldots, b_{2n+1}]$ --- which satisfies the hypotheses of the theorem by Lemma~\ref{lem:good-pinch} --- is also fillable.  Thus, the disjoint union $\leg_0 \sqcup \leg'$ is fillable.  To complete the construction of a filling for the original link, we start with the filling of $\leg_0 \sqcup \leg'$ and add to it a cobordism that is constructed using a Legendrian isotopy of $\leg_0$, followed by the attachment of two $1$-handles as in Figure~\ref{fig:Ron}.

In the case where the leftmost band is negative, we recall Remark~\ref{rmk:1-neg-xing},  namely that a $4$-plat link in which negative bands have at most two crossings that begins with a negative band and has bands strictly alternating in sign must have a single crossing in its first and last band. With this observation in hand, the second case proceeds exactly as does the first, with the operations of Figure~\ref{fig:Hermione} taking the place of those in Figure~\ref{fig:Ron}.

\begin{figure}
\labellist
	\small
	\pinlabel $b_{2u}-2$ [b] at 64 72
	\pinlabel $b_{2l}$ [b] at 64 18

	\pinlabel $b_{2u}-2$ [b] at 235 72
	\pinlabel $b_{2l}$ [b] at 235 18
		
	\pinlabel $b_3-1$ [b] at 390 45
	\pinlabel $b_3-1$ [b] at 390 180
\endlabellist

\centerline{\includegraphics[width=5in]{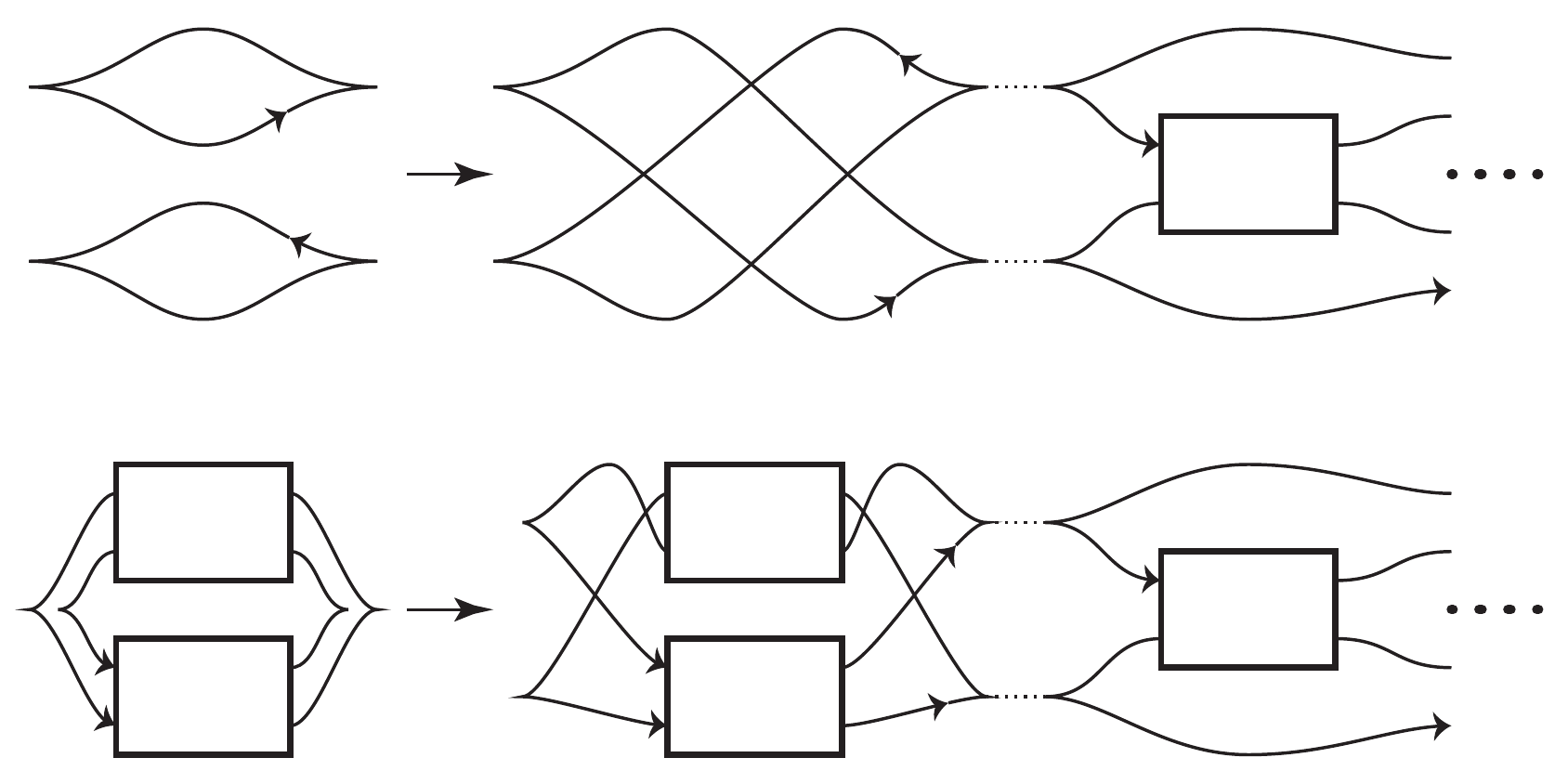}}
\caption{Inductive step for constructing a filling in the case where the initial band is negative. The top diagram represents the case where $b_2$ is a (positive) split double. The bottom diagram represents the case where one of the sub-bands of $b_2$ contains at least two crossings.}
\label{fig:Hermione}
\end{figure}

% **********
\section{Proof of Corollary~\ref{cor:pos}}
\label{sec:La}

In this final section, we prove Corollary~\ref{cor:pos}, which states that if a Legendrian $4$-plat is fillable, then the underlying smooth knot type is positive. Our proof proceeds inductively once again. Starting with the smooth version of a fillable Legendrian $4$-plat knot, we work from left to right on the knot diagram, using isotopy to eliminate negative crossings two at a time until only positive crossings remain. During the process of eliminating negative crossings, our knots will always be of the following form:

\begin{defn}
We will say that a diagram of a smooth knot is in \dfn{positive/$4$-plat (P4P)} form if there exist $x$ coordinates $x_\pm$ such that:
    \begin{enumerate}
        \item To the left of $x_-$, all crossings are positive. We will denote this portion of the knot by $P$.
        \item To the right of $x_+$, the diagram looks like the right-hand portion of the smoothing of a diagram of a Legendrian $4$-plat knot in plat form. We will denote this portion of the knot by $K$.
        \item Between $x_-$ and $x_+$, there are exactly four crossings, with signs (from left to right) positive, negative, negative, positive. We will call these the middle crossings.
    \end{enumerate}
\end{defn}

As a first step, suppose that the first band $b_1$ is negative.  As noted in Section~\ref{sec:So}, this band has but one crossing, which can be removed by a Reidemeister II move. We are now in a situation where the leftmost crossing of the diagram is positive.

\begin{figure}
\labellist
	\small
	\pinlabel $P$ [b] at 41 65
	\pinlabel $K$ [b] at 158 65

	\pinlabel $P$ [b] at 41 200
	\pinlabel $K$ [b] at 158 200

	\pinlabel $P$ [b] at 41 325
	\pinlabel $K$ [b] at 158 325
	
	\pinlabel $P$ [b] at 283 65
	\pinlabel $K$ [b] at 401 65

	\pinlabel $P$ [b] at 283 200
	\pinlabel $K$ [b] at 401 200

	\pinlabel $P$ [b] at 283 325
	\pinlabel $K$ [b] at 401 325

\endlabellist

\centerline{\includegraphics[width=4.5in]{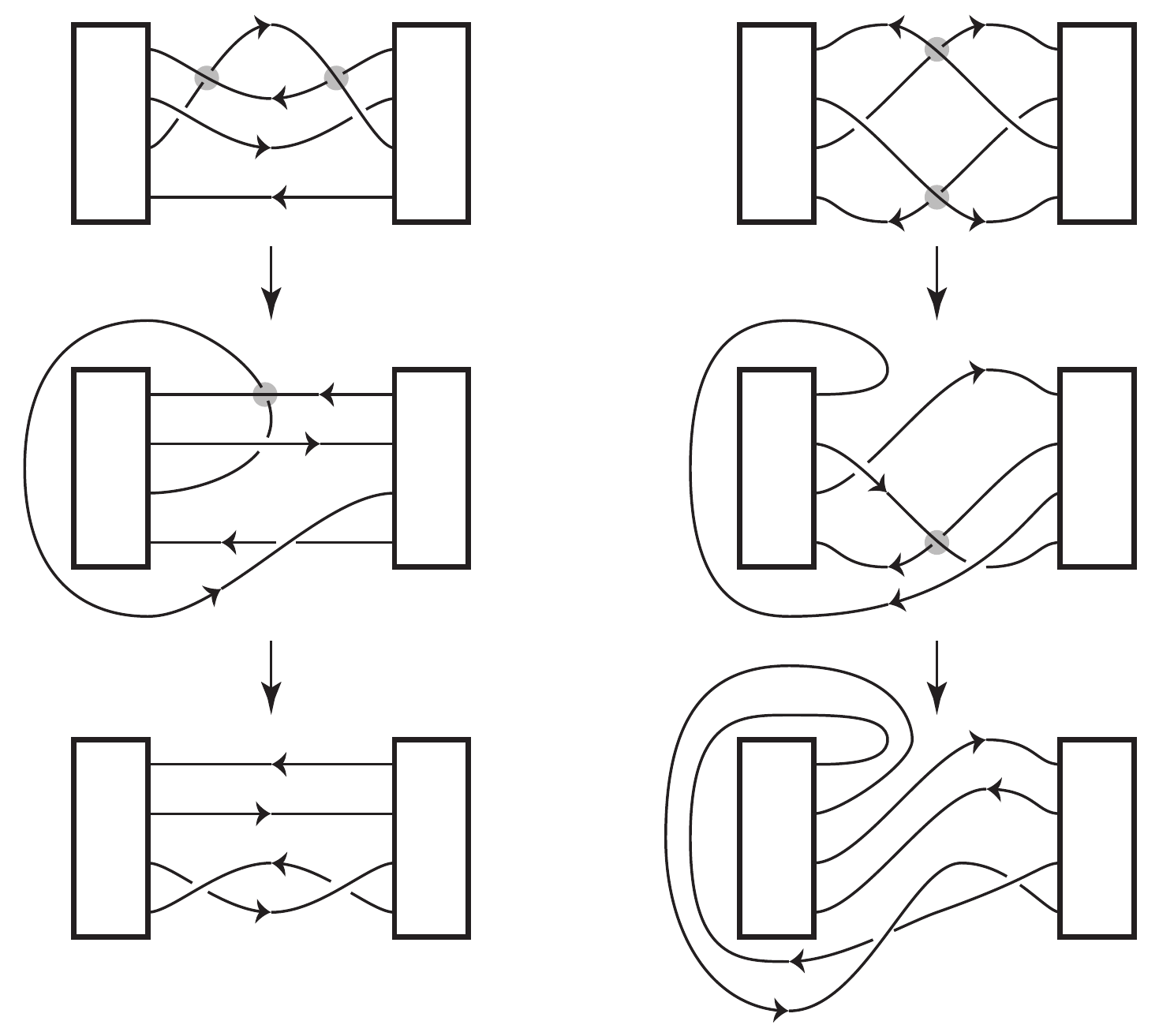}}
\caption{Eliminating a pair of negative crossings for a side double (left) and a split double (right) for a knot with first band positive. The shaded circles mark negative crossings.}
\label{fig:pos_corr}
\end{figure}

\begin{figure}
\labellist
	\small
	\pinlabel $P$ [b] at 48 65
	\pinlabel $K$ [b] at 166 65

	\pinlabel $P$ [b] at 48 200
	\pinlabel $K$ [b] at 166 200

	\pinlabel $P$ [b] at 48 335
	\pinlabel $K$ [b] at 166 335
	
	\pinlabel $P$ [b] at 291 65
	\pinlabel $K$ [b] at 409 65

	\pinlabel $P$ [b] at 291 200
	\pinlabel $K$ [b] at 409 200

	\pinlabel $P$ [b] at 291 335
	\pinlabel $K$ [b] at 409 335

\endlabellist

\centerline{\includegraphics[width=4.5in]{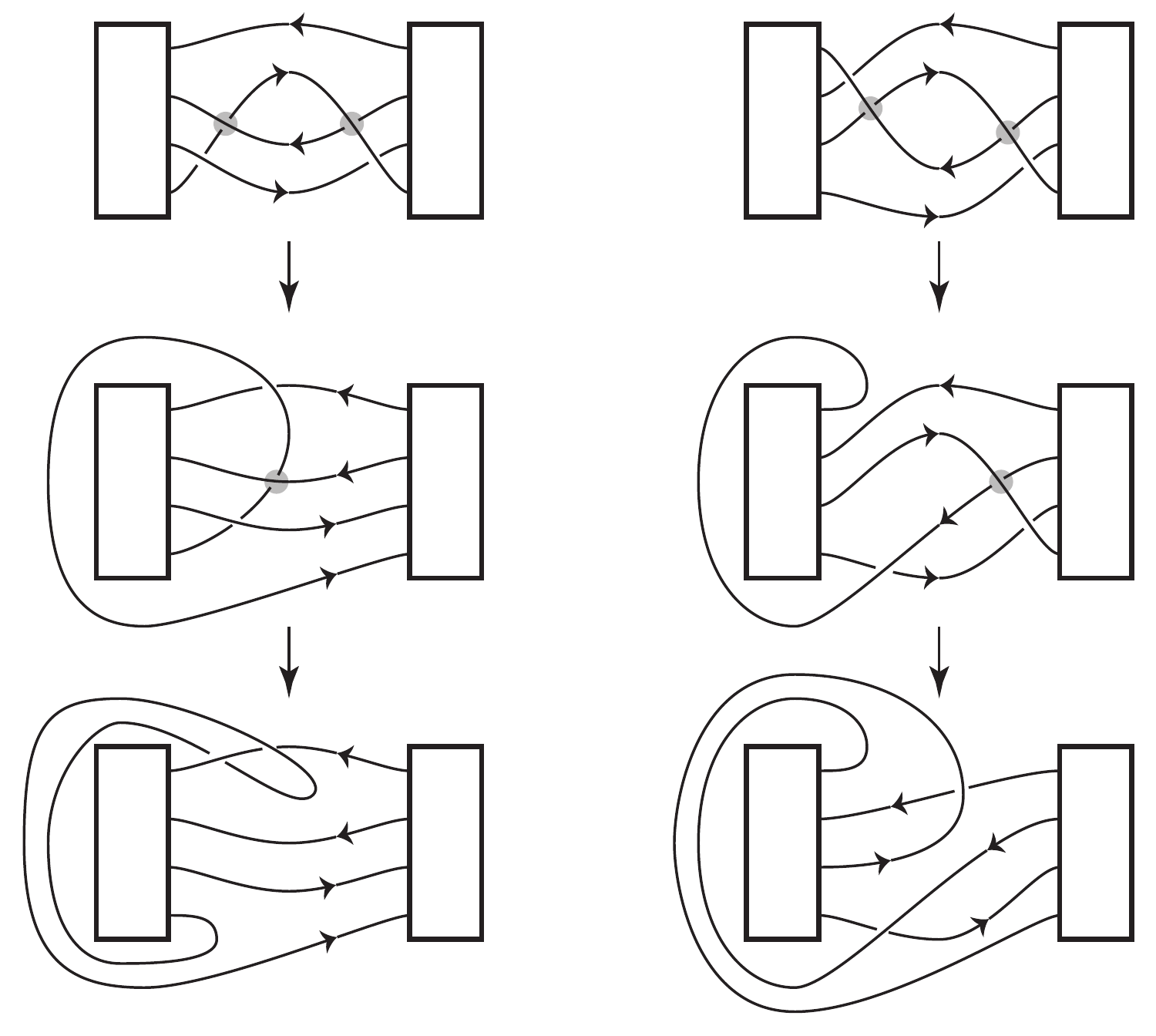}}
\caption{Eliminating a pair of negative crossings for a center band for a knot with first band negative. The two cases correspond to two different relative positions of the adjacent side bands. The shaded circles mark negative crossings.}
\label{fig:neg_corr}
\end{figure}

Suppose inductively that we have eliminated negative crossings up to the band $b_k$ while keeping the diagram in P4P form with the crossings of $b_k$ and those immediately to the left and right serving as the middle crossings.  We eliminate the two negative crossings of $b_k$ in four cases:
\begin{itemize}
\item $b_k$ is a pure side band (see Figure~\ref{fig:pos_corr}(a)),
\item $b_k$ is a split band (see Figure~\ref{fig:pos_corr}(b)),
\item $b_k$ is a center band with adjacent crossings on the same side (see Figure~\ref{fig:neg_corr}(a)), and
\item $b_k$ is a center band with adjacent crossings on opposite sides (see Figure~\ref{fig:neg_corr}(b)).
\end{itemize}
It is easy to see that after performing the isotopies depicted in Figures~\ref{fig:pos_corr} and \ref{fig:neg_corr}, the diagram is once again in P4P form with middle crossings around and including $b_{k+2}$.  

If the last band is negative, and hence has a single crossing, simply perform a Reidemeister II move to remove it.  Either way, at the end of this procedure, we are left with a positive diagram, hence proving the corollary.

%--------------------------------------------

\bibliographystyle{amsplain}
\bibliography{main}

\end{document}